\documentclass[11pt]{amsart}

\usepackage{amsmath,amssymb,amsthm,mathtools}
\usepackage{enumitem}
\usepackage{aliascnt}
\usepackage[hidelinks]{hyperref}
\usepackage[nameinlink,capitalise]{cleveref}
\usepackage{setspace}

\newtheorem{theorem}{Theorem}[section]
\newaliascnt{proposition}{theorem}
\newtheorem{proposition}[proposition]{Proposition}
\aliascntresetthe{proposition}
\newaliascnt{lemma}{theorem}
\newtheorem{lemma}[lemma]{Lemma}
\aliascntresetthe{lemma}
\newaliascnt{corollary}{theorem}
\newtheorem{corollary}[corollary]{Corollary}
\aliascntresetthe{corollary}
\theoremstyle{definition}
\newaliascnt{definition}{theorem}
\newtheorem{definition}[definition]{Definition}
\aliascntresetthe{definition}
\newaliascnt{example}{theorem}
\newtheorem{example}[example]{Example}
\aliascntresetthe{example}
\theoremstyle{remark}
\newaliascnt{remark}{theorem}
\newtheorem{remark}[remark]{Remark}
\aliascntresetthe{remark}

\newcommand{\LLY}{\mathrm{LLY}}
\newcommand{\DN}{\mathrm{DN}}
\newcommand{\Etl}{E_{\mathrm{tl}}}
\newcommand{\core}{\operatorname{core}_2}
\newcommand{\Dist}{\operatorname{dist}}
\newcommand{\Skel}{\mathcal S}

\title[Curvature-Distortion Numbers of Graphs]{Curvature-Distortion Numbers of Graphs: Prescribed Lin--Lu--Yau Curvature Lower Bounds}
\author{Qing Xia}
\address{School of Mathematical Sciences\\
University of Science and Technology of China\\
96 Jinzhai Road\\
Hefei 230026, Anhui Province\\
China}
\email{xq0420@mail.ustc.edu.cn}
\date{\today}

\subjclass[2020]{05C05, 05C12, 05C63, 05C81}
\keywords{Lin--Lu--Yau curvature, curvature-distortion number,
prescribed curvature lower bound, curvature profile, weighted graph, tree, tree-like skeleton, 2-core}

\begin{document}

\begin{abstract}
We study an optimization problem for weighted Lin--Lu--Yau curvature.
Given a graph $G=(V,E)$ and a prescribed lower curvature bound $k\ge0$,
we ask for the smallest multiplicative distortion
\[
\Dist(w):=\frac{\sup_{e\in E}w_e}{\inf_{e\in E}w_e}
\]
of the edge weights needed to make every edge have curvature at least
$k$. This leads to the family
\[
\DN_{\LLY}^{k}(G)
:=
\inf\bigl\{
\Dist(w):
\kappa_{\LLY}^{w}(e)\ge k
\text{ for every }e\in E(G)
\bigr\},
\]
which extends the nonnegative-curvature invariant introduced in our
preceding paper \cite{Xia2026InvariantI}. The edge weights change the
transition probabilities, while the combinatorial graph distance
remains fixed.

Positive prescribed curvature leads to a fundamental distinction from
the case $k=0$. If $G$ is a connected locally finite infinite graph and
$k>0$, then
\[
\DN_{\LLY}^{k}(G)=\infty.
\]
Thus no finite multiplicative distortion of the edge weights can impose
a uniform positive curvature lower bound on such a graph.

For a finite tree $T$ with $m$ edges, such weights exist exactly
when $k\le2/m$. We prove that the minimum distortion is attained
by a weight unique up to scaling, and that the normalized
minimizer and its distortion depend continuously and
nondecreasingly on $k$.

We then extend the finite-tree theory to general connected graphs. As an
application, we characterize the graphs of girth at least six for which
finite $k$-distortion is possible.
\end{abstract}

\maketitle

\section{Introduction and statement of results}
\label{sec:intro}

Our setting combines the transport curvature introduced by Ollivier
\cite{Ollivier2009} with the Lin--Lu--Yau high-idleness limit
\cite{LinLuYau2011}; the idleness formulation is further developed in
\cite{BCLMP2018}. We use normalized transition probabilities coming from
positive edge weights, in the general weighted-Laplacian framework of
M\"unch and Wojciechowski \cite{MunchWojciechowski2019}, but keep the
underlying combinatorial distance as the transportation cost. This is also
the convention adopted by Lin and Liu in their prescribed-curvature flow
\cite{LinLiu2026}.

In our preceding paper \cite{Xia2026InvariantI}, we introduced the
Lin--Lu--Yau curvature-distortion number and developed the case in which
all edge curvatures are required to be nonnegative. The present paper
replaces the lower bound $0$ by a prescribed number $k\ge0$ and asks for
the least multiplicative distortion needed to realize that bound.

Throughout, graphs are simple and undirected. For a graph $G=(V,E)$ we
write $x\sim y$, or equivalently $xy\in E$, when $x$ and $y$ are
adjacent, put $N_G(x):=\{y\in V:y\sim x\}$, and write
$d_x:=|N_G(x)|$ for the combinatorial degree. A graph is
\emph{locally finite} if $d_x<\infty$ for every vertex $x$. We define
curvature-distortion numbers only for connected locally finite graphs.
For a positive edge weight $w:E\to(0,\infty)$, write
$w_{xy}=w_e$ when $e=xy$, and define the weighted degree at $x$ by
\[
d_x^w:=\sum_{y\sim x}w_{xy}.
\]
The weighted Lin--Lu--Yau curvature is denoted by
$\kappa_{\LLY}^w$ and is recalled precisely in
\cref{subsec:weighted-LLY}. Let
\[
\kappa_{\LLY}^w(G):=\inf_{e\in E}\kappa_{\LLY}^w(e).
\]
Then $\kappa_{\LLY}^w(G)\ge k$ means that $\kappa_{\LLY}^w(e)\ge k$ for every $e\in E(G)$; in
particular, $\kappa_{\LLY}^w(G)>0$ implies strict positivity on every
edge.

For a positive edge weight $w$, define the \emph{distortion} of $w$ by
\[
\Dist(w):=\frac{\sup_{e\in E}w_e}{\inf_{e\in E}w_e},
\]
and take $\Dist(w)=\infty$ whenever the quotient is not finite. For finite $G$, this is equivalent to
$\Dist(w)=\frac{\max_{e\in E}w_e}{\min_{e\in E}w_e}$. We then
define the \emph{$k$-LLY-distortion number} of $G$ by
\begin{equation}
\label{eq:DNk-intro}
\DN_{\LLY}^{k}(G)
:=\inf\left\{
\Dist(w):
\kappa_{\LLY}^{w}(G)\ge k
\right\}.
\end{equation}
A positive weight satisfying $\kappa_{\LLY}^{w}(G)\ge k$ is called
\emph{$k$-admissible}; it is called \emph{$k$-optimal} if its distortion
equals $\DN_{\LLY}^{k}(G)$. A weight has \emph{finite distortion}
if $\Dist(w)<\infty$. When $E\ne\varnothing$, a finite-distortion
weight is called \emph{normalized} if
$\inf_{e\in E}w_e=1$; for finite $E$ this is equivalent to
$\min_{e\in E}w_e=1$. If no finite-distortion $k$-admissible weight
exists, we set $\DN_{\LLY}^{k}(G)=\infty$, and by convention
$\DN_{\LLY}^{k}(K_1)=1$. Scale invariance makes
\eqref{eq:DNk-intro} a scale-invariant variational problem. For $k=0$
we recover the invariant of \cite{Xia2026InvariantI}, and we write
\[
\DN_{\LLY}(G):=\DN_{\LLY}^{0}(G).
\]

An edge is \emph{tree-like} if it lies on no cycle of length $3$, $4$,
or $5$. We write $\Etl(G)$ for the set of tree-like edges and
\[
\Skel(G):=(V(G),\Etl(G))
\]
for the \emph{tree-like skeleton}; a component of $\Skel(G)$ is
\emph{nontrivial} if it contains an edge. For a tree $T$, a vertex of
degree at least $3$ is a \emph{branch vertex}, and
\[
B(T):=\{v\in V(T):d_v\ge3\}.
\]

The local curvature inequality on a tree-like edge is explicit. If
$xy\in\Etl(G)$ and
\[
A_x=d_x^w-w_{xy},\qquad A_y=d_y^w-w_{xy},
\]
then, for $0\le k<2$,
\begin{equation}
\label{eq:k-local-intro}
\kappa_{\LLY}^w(x,y)\ge k
\quad\Longleftrightarrow\quad
w_{xy}\ge\Psi_k(A_x,A_y),
\end{equation}
where
\[
\Psi_k:[0,\infty)^2\longrightarrow[0,\infty)
\]
is given by
\begin{equation}
\label{eq:Psi-intro}
\Psi_k(a,b)
:=
\frac{k(a+b)+\sqrt{k^2(a-b)^2+16ab}}{2(2-k)}.
\end{equation}
At $k=0$, $\Psi_0(a,b)=\sqrt{ab}$. For a finite tree $T$ and
$x\in[1,\infty)^{E(T)}$, put
\[
A_{u\mid v}(x)
:=\sum_{\substack{f\in E(T)\\ f\ni u,\ f\ne uv}}x_f
\]
and define
\begin{equation}
\label{eq:Fk-intro}
(F_{T,k}(x))_{uv}
:=\max\left\{1,
\Psi_k(A_{u\mid v}(x),A_{v\mid u}(x))
\right\}.
\end{equation}
For $k=0$ this is exactly the all-edge LLY-distortion map used in
\cite{Xia2026InvariantI}. Thus $F_{T,k}$ extends the fixed-point system
from the nonnegative-curvature case to a prescribed lower bound.

The relevant definitions for the following theorem are given in \cref{def:k-restoration-map}.

\begin{theorem}[Finite-tree $k$-profile]
\label{thm:tree-profile-intro}
Let $T$ be a finite tree with $m:=|E(T)|\ge1$. Then
\begin{equation}
\label{eq:tree-feasible-range-intro}
\DN_{\LLY}^{k}(T)<\infty
\quad\Longleftrightarrow\quad
0\le k\le\frac{2}{m}.
\end{equation}
If $m=1$, then $T=K_2$ and
\[
\DN_{\LLY}^{k}(K_2)=1
\qquad(0\le k\le2).
\]
Assume $m\ge2$. For every $k\in[0,2/m]$, the map $F_{T,k}$ has a
unique normalized fixed point
$w_T^{\star,k}\in[1,\infty)^{E(T)}$, and
\[
\DN_{\LLY}^{k}(T)
=\max_{e\in E(T)}w_{T,e}^{\star,k}.
\]
This is the unique normalized $k$-optimal weight and satisfies
\[
(w_{T,e}^{\star,k}-1)
\bigl(\kappa_{\LLY}^{w_T^{\star,k}}(e)-k\bigr)=0
\qquad(e\in E(T)).
\]
For every edge $e$, both
\[
k\longmapsto w_{T,e}^{\star,k}
\quad\text{and}\quad
k\longmapsto\DN_{\LLY}^{k}(T)
\]
are nondecreasing and continuous on $[0,2/m]$. At $k=2/m$,
\[
\kappa_{\LLY}^{w_T^{\star,2/m}}(e)=\frac{2}{m}
\qquad(e\in E(T)).
\]
\end{theorem}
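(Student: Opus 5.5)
The plan is to first turn the local criterion \eqref{eq:k-local-intro} into a linear statement about transition probabilities. On a tree every edge is tree-like, so $k$-admissibility of $w$ means exactly $w_e\ge\Psi_k(A_x,A_y)$ for every edge $e=xy$. Squaring the defining inequality of $\Psi_k$ and simplifying should give
\[
w\ge\Psi_k(a,b)\iff k(w+a)(w+b)\le2(w^2-ab),
\]
and, using $(w^2-ab)/((w+a)(w+b))=w/(w+a)+w/(w+b)-1$, the equivalent form
\[
\frac{w_e}{d_x^w}+\frac{w_e}{d_y^w}\ \ge\ 1+\frac k2 .
\]
In other words, on a tree $\kappa^w_{\LLY}(xy)=2\bigl(w_{xy}/d^w_x+w_{xy}/d^w_y-1\bigr)$. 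I will check the branch of the square root ($k<2$), and treat $K_2$ separately, since there $a=b=0$ and the curvature equals $2$.

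\textbf{Feasibility range.} Summing the displayed inequality over all $m$ edges, the left side regroups by vertices as $\sum_v\sum_{e\ni v}w_e/d^w_v=|V|=m+1$. Hence admissibility forces $m(1+k/2)\le m+1$, which is $k\le 2/m$. When $k=2/m$, equality must hold edgewise, which gives the final claim $\kappa=2/m$ on every edge.

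For sufficiency it is enough, by monotonicity in $k$, to build one admissible weight at $k=2/m$. To do this I would solve the linear system $p_{x,e}+p_{y,e}=1+1/m$ on edges and $\sum_{e\ni v}p_{v,e}=1$ at vertices, with all $p_{v,e}>0$. On a tree this can be solved by peeling leaves: a leaf $\ell$ has $p_{\ell,e}=1$, which forces $p_{\text{other},e}=1/m$, and an inductive count should show the remaining values stay positive. I would then recover $w$ from $p$ by propagating the ratios $w_e/w_f=p_{v,e}/p_{v,f}$ across the tree; these ratios are consistent because $T$ has no cycles.

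\textbf{Fixed point and optimality.} The map $F_{T,k}$ is monotone because $\Psi_k$ is nondecreasing in each argument, and $\Psi_k$ is positively $1$-homogeneous. If $\bar w$ is admissible, then $c\bar w$ with $c\ge1/\min\bar w$ satisfies $F_{T,k}(c\bar w)\le c\bar w$. Kleene iteration from $\mathbf 1$ therefore increases and is bounded, so it converges to a least fixed point $w^\star$. Any normalized admissible $u$ satisfies $u\ge\mathbf 1$ and $F_{T,k}(u)\le u$, hence $w^\star\le u$ and $\max w^\star\le\max u$; this gives $\DN^k_{\LLY}(T)=\max w^\star$. Note also that $\min w^\star=1$: the argument of $\Psi_k$ for an edge meeting a leaf is small, so some coordinate equals $1$. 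The complementary slackness identity is immediate from the $\max\{1,\cdot\}$ form, because $w_e>1$ forces $w_e=\Psi_k$, i.e. $\kappa(e)=k$.

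\textbf{Main obstacle: uniqueness.} I expect uniqueness, both of normalized fixed points and of normalized optimal weights, to be the hard step. For fixed points I would try a Krein–Rutman/Hilbert-metric style argument. If $u\ge w^\star$ are fixed points, let $t=\max_e u_e/w^\star_e>1$; then $u\le tw^\star$ and $F(u)\le tF(w^\star)$. At the coordinates where the $1$ in the max is active, the inequality is strict, and connectivity of $T$ should propagate the strictness to a contradiction. For optimal weights, I would show that any normalized optimal weight must itself be a fixed point; otherwise lowering an edge with slack would reduce the maximum or contradict minimality via the least-fixed-point comparison. This step may need care when several edges attain the maximum.

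\textbf{Dependence on $k$.} Monotonicity in $k$ follows because $\Psi_k$ increases in $k$, so the least fixed points increase. Left continuity comes from monotone limits of fixed points being fixed points. Right continuity comes from boundedness together with uniqueness, since any limit point is a normalized fixed point.
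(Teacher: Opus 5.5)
\textbf{Most of your plan works; the gap is uniqueness of the optimal weight.} The sound parts:
\begin{itemize}
\item The share-sum count gives $k\le2/m$ and equality of all curvatures at $k=2/m$.
\item Your leaf-peeling system does have a positive solution. Root $T$ anywhere and write $t_v$ for the size of the subtree at $v$ and $e_v$ for its parent edge. Then $p_{v,e_v}=1-(t_v-1)/m\ge 1/m$ and $p_{\mathrm{par}(v),e_v}=t_v/m$. This lets you avoid citing Lin--Liu.
\item Upward Kleene iteration gives a least feasible vector, and hence $\DN_{\LLY}^k(T)=\max w^\star$.
\item Your ratio argument for normalized fixed points works if you start the strictness at an edge with $u_e=1<t\le t\,w^\star_e$.
\end{itemize}
Your stated reason for $\min w^\star=1$ is not valid: pendant canonical weights can exceed $1$ (\cref{ex:pendant-weight-greater-one}). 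The claim does follow from your own minimality, since $w^\star\le\bar w$ for any normalized admissible $\bar w$.

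The genuine gap is the uniqueness of the normalized $k$-optimal weight. Lowering a slack edge cannot prove it. When $D$ is attained on several edges, replacing $u_e$ by $F_{T,k}(u)_e$ just produces another optimal weight. Also, $w^\star\le u$ is compatible with $u\ne w^\star$, so no contradiction arises. Optimal weights with slack do form continua on non-trees (\cref{ex:nonuniqueness_phenomenon}). So any correct argument must use how $w^\star$ sits on the tree.

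\textbf{The missing idea} is that a strict excess of $u$ over $w^\star$ propagates through edges on which $w^\star$ is tight. Let $u$ be normalized optimal, so $u\ge w^\star$ and $\max u=D$. Assume $D>1$ (otherwise $u=\mathbf 1$) and $u_f>w^\star_f$ for some $f$. Suppose $e=vz$ meets $f$ at $v$ and $w^\star_e=\Psi_k(A_{v\mid z}(w^\star),A_{z\mid v}(w^\star))$. Then admissibility of $u$ and strict monotonicity of $\Psi_k$ give
\[
u_e\ge\Psi_k\bigl(A_{v\mid z}(u),A_{z\mid v}(u)\bigr)
>\Psi_k\bigl(A_{v\mid z}(w^\star),A_{z\mid v}(w^\star)\bigr)=w^\star_e .
\]
The propagation then runs as follows.
\begin{itemize}
\item For $k>0$, every non-pendant edge is tight. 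There $w^\star_e\ge\Psi_k(1,1)=(2+k)/(2-k)>1$, so complementarity applies.
\item An edge $e_0$ with $w^\star_{e_0}=D>1$ is also tight.
\item In a tree, a shortest chain of consecutively adjacent edges from $f$ to $e_0$ has only non-pendant interior edges.
\item Hence the excess reaches $e_0$, giving $u_{e_0}>D$, which contradicts $\max u=D$.
\end{itemize}
At $k=0$ the same argument works. Non-pendant edges are tight because $\Psi_0\ge1$ there, and $\sqrt{ab}$ is strictly increasing in $a$ when $b>0$.
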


At $k=0$ this recovers the finite-tree canonical fixed point of
\cite{Xia2026InvariantI}. At $k=2/m$ it reaches the
constant-curvature regime of Lin and Liu
\cite[Theorem~3.1]{LinLiu2026}. For intermediate values of $k$, the theorem determines the optimal
weights and their distortion between these two endpoints.

For $k>0$, a connected locally finite infinite graph cannot carry a
$k$-admissible weight. This follows from the standard discrete
Bonnet--Myers argument in the fixed-distance weighted model.

\begin{proposition}[Discrete Bonnet--Myers]
\label{prop:BM}
Let $G$ be connected and locally finite. If a positive edge weight $w$
satisfies
\[
\kappa_{\LLY}^{w}(e)\ge k>0
\qquad(e\in E(G)),
\]
then
\begin{equation}
\label{eq:BM}
\operatorname{diam}G\le\frac{2}{k}.
\end{equation}
\end{proposition}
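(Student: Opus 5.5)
We follow the standard Bonnet--Myers argument of Lin--Lu--Yau and Ollivier, adapted to the weighted random walk with the fixed combinatorial distance. Fix two vertices $x,y$ and put $L:=d(x,y)$. If $L\le1$ then $L\le 2/k$ automatically: since $\kappa_{\LLY}^w(e)\le 2$ always (the Wasserstein distance between the idle measures at distance one is bounded below appropriately), the hypothesis forces $k\le2$. Assume $L\ge2$ and choose a geodesic $x=x_0\sim x_1\sim\cdots\sim x_L=y$. For idleness $\alpha\in[0,1)$, let $\mu_v^\alpha$ denote the lazy measure $\mu_v^\alpha=\alpha\delta_v+(1-\alpha)\sum_{u\sim v}\frac{w_{vu}}{d_v^w}\delta_u$. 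By definition
\[
\kappa_\alpha(u,v)=1-\frac{W_1(\mu^\alpha_u,\mu^\alpha_v)}{d(u,v)},\qquad \kappa_{\LLY}^w(u,v)=\lim_{\alpha\to1}\frac{\kappa_\alpha(u,v)}{1-\alpha}.
\]

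\textbf{Step 1: pass from edges to the pair $(x,y)$.} Summing over the geodesic edges with the triangle inequality for $W_1$ gives
\[
W_1(\mu_x^\alpha,\mu_y^\alpha)\le\sum_{i}W_1(\mu_{x_{i}}^\alpha,\mu_{x_{i+1}}^\alpha)=\sum_i\bigl(1-\kappa_\alpha(x_i,x_{i+1})\bigr).
\]
Dividing by $1-\alpha$ and letting $\alpha\to1$ yields $\liminf \kappa_\alpha(x,y)/(1-\alpha)\ge \frac1L\sum_i\kappa_{\LLY}^w(x_ix_{i+1})\ge k$. To avoid depending on existence of the limit for non-adjacent pairs, we keep everything at the level of $\liminf$, using $\kappa_\alpha(e)\ge(1-\alpha)(k-\varepsilon)$ for $\alpha$ close to $1$. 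This requires only finitely many edges, so local finiteness suffices.

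\textbf{Step 2: the trivial upper bound.} Transporting $\mu_x^\alpha$ to $\delta_x$ costs at most $(1-\alpha)$, since mass $1-\alpha$ moves distance $1$; similarly for $y$. Hence
\[
W_1(\mu_x^\alpha,\mu_y^\alpha)\ge d(x,y)-W_1(\mu_x^\alpha,\delta_x)-W_1(\mu_y^\alpha,\delta_y)\ge L-2(1-\alpha).
\]
Therefore $\kappa_\alpha(x,y)\le 2(1-\alpha)/L$. Combining this with Step 1 gives $(1-\alpha)(k-\varepsilon)\le 2(1-\alpha)/L$, so $L\le2/(k-\varepsilon)$. Letting $\varepsilon\to0$ gives $L\le 2/k$, and taking the supremum over $x,y$ gives \eqref{eq:BM}.

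\textbf{Main obstacle.} The only delicate point is justifying that the weighted LLY curvature is a limit, or at least a $\liminf$ bound, of $\kappa_\alpha/(1-\alpha)$ in the fixed-distance weighted model recalled in \cref{subsec:weighted-LLY}. If that subsection defines $\kappa_{\LLY}^w$ via the limit (as in Münch--Wojciechowski), Step 1 is immediate. Otherwise we would first use concavity of $\alpha\mapsto\kappa_\alpha$, as in Bourne et al., to obtain $\kappa_\alpha\ge(1-\alpha)\kappa_{\LLY}^w$ on edges directly, which makes the argument even cleaner. Local finiteness is used only to ensure that all measures are finitely supported and $W_1$ is finite.
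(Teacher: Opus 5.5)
Your proof is correct and is essentially the paper's argument: the paper uses the single chain $d(x,y)=W_1(\delta_x,\delta_y)\le W_1(\delta_x,\mu_x^{\alpha,w})+\sum_{i}W_1(\mu_{x_{i-1}}^{\alpha,w},\mu_{x_i}^{\alpha,w})+W_1(\mu_y^{\alpha,w},\delta_y)$, with $W_1(\delta_z,\mu_z^{\alpha,w})=1-\alpha$ and $\alpha$ chosen close to $1$ uniformly over the finitely many geodesic edges, which is just your Steps~1 and~2 merged into one inequality. Two of your side remarks are unneeded or wrong, though the proof does not use either: the case $L\le1$ needs no separate treatment (your Step~2 with $L=1$ is exactly the bound $\kappa_{\LLY}^w\le2$ you invoke there), and your concavity fallback has the inequality reversed, since concavity of $\alpha\mapsto\kappa_\alpha^w$ together with $\kappa_1^w=0$ gives $\kappa_\alpha^w\le(1-\alpha)\kappa_{\LLY}^w$ (on $K_2$ one has $\kappa_0^w=0<2=\kappa_{\LLY}^w$), with equality guaranteed only for $\alpha$ near $1$; this is harmless because the paper defines $\kappa_{\LLY}^w$ as the limit.
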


Consequently, every connected locally finite infinite graph satisfies
$\DN_{\LLY}^{k}(G)=\infty$ for $k>0$, see \cref{cor:no-infinite-positive-k}.

The second main ingredient is an exact response formula for hanging
trees. Suppose a finite tree $C$ with $t$ vertices is attached to an
exterior vertex $x$ through one boundary edge $e=xr$. Writing
$p_x^w(e)=w_e/d_x^w$, every $k$-admissible weighting must satisfy
\begin{equation}
\label{eq:root-response-intro}
p_x^w(e)\ge\frac{kt}{2},
\end{equation}
and this condition is also sufficient for filling the hanging tree
with positive weights whose internal curvatures equal $k$. Equivalently,
if $A_x=d_x^w-w_e$ and $kt<2$, then
\[
w_e\ge\frac{kt}{2-kt}A_x.
\]
The response depends only on $t$, not on the shape of the hanging tree.

This gives an exact positive-curvature version of the $k=0$ $2$-core
reduction in \cite{Xia2026InvariantI}. For a finite graph $G$, let
$\core(G)$ denote its $2$-core, obtained by repeatedly deleting vertices
of degree $0$ or $1$; see \cref{def:2-core}. Suppose $K:=\core(G)\ne\varnothing$. Every
component of $G-V(K)$ is then a tree attached to a unique vertex of $K$
by one edge. Set
\[
A_G:=\{x\in V(K):N_G(x)\setminus V(K)\ne\varnothing\},
\]
let $h_G(x)$ be the total number of vertices in the components of
$G-V(K)$ attached at $x$, and form $\widehat K_G$ by adjoining, for each $x\in A_G$, a new
pendant vertex $x^\partial$ adjacent only to $x$. We call
$xx^\partial$ the \emph{proxy edge} at $x$. A
positive weight $\widehat w$ on $\widehat K_G$ is called
\emph{$k$-loaded admissible} if
\[
\kappa_{\LLY}^{\widehat w}(e)\ge k
\ \ (e\in E(K)),\ \text{and}\ \kappa_{\LLY}^{\widehat w}(xx^\partial)\ge k h_G(x)
\ \ (x\in A_G);
\]
see exact definitions in Section~\ref{subsec:loaded-core}.

\begin{theorem}[$k$-loaded $2$-core reduction]
\label{thm:loaded-core-intro}
Let $G$ be a finite connected graph with nonempty $2$-core, and let
$k\ge0$. With the notation above,
\begin{equation}
\label{eq:k-loaded-core-equivalence}
\DN_{\LLY}^{k}(G)<\infty
\quad\Longleftrightarrow\quad
\widehat K_G\text{ admits a $k$-loaded admissible weight}.
\end{equation}
\end{theorem}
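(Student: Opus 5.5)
The plan is to prove both directions of \eqref{eq:k-loaded-core-equivalence} using the hanging-tree response \eqref{eq:root-response-intro}. Two facts are available at the outset. Since $G$ is finite, every positive weight has finite distortion, so $\DN_{\LLY}^{k}(G)<\infty$ is simply the existence of some $k$-admissible weight on $G$. And the attached components at $x\in A_G$ form a forest: several trees $C_1,\dots,C_j$ hang at $x$ by boundary edges $e_i=xr_i$, with total vertex count $h_G(x)=\sum_i t_i$. The idea is to compress this whole forest into the single proxy edge $xx^\partial$ carrying weight $W_x:=\sum_i w_{e_i}$. The proxy edge is pendant, so by \eqref{eq:k-local-intro} (with $A_{x^\partial}=0$), or by a direct computation, its curvature depends only on the transition probability $p_x(xx^\partial)=W_x/d_x^{\widehat w}$.

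First I would record that the curvature of an edge of $K$ is unchanged by this compression. For $e=uv\in E(K)$, $\kappa_{\LLY}^w(e)$ depends only on the transition measures at $u$ and $v$ and on distances among their neighbours. The neighbours of $u$ outside $K$ are distinct pendant-side vertices that are adjacent to nothing else near $e$. The $r_i$ lie on no short cycles, and in the transport to $v$'s neighbourhood each $r_i$ sits at distance $\ge 2$ from $N(v)\setminus\{u\}$, with the same distances as a single vertex $x^\partial$. Merging their masses into $x^\partial$ therefore leaves the Wasserstein distance unchanged. The cleanest route is to use the limit-free characterization of LLY curvature by $1$-Lipschitz functions, whose values on the $r_i$ may be taken equal without loss. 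Degrees are preserved since $d_x^{\widehat w}=d_x^w$.

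Next, for the forward direction, I start from a $k$-admissible $w$ on $G$ and apply \eqref{eq:root-response-intro} to each $C_i$, viewing $C_i$ as hanging at $x$ from the exterior. This gives $w_{e_i}/d_x^w\ge kt_i/2$, and summing yields $p_x(xx^\partial)\ge kh_G(x)/2$. I then check the pendant-edge formula: for a pendant edge $xy$ with $p=p_x(xy)$, $\kappa_{\LLY}=2p$ when $d_y=1$. This follows from \eqref{eq:k-local-intro} with $A_y=0$: there $\Psi_k(a,0)=ka/(2-k)$, so $w\ge ka/(2-k)$ is equivalent to $p\ge k/2$. Hence the proxy condition $\kappa(xx^\partial)\ge kh_G(x)$ is exactly $p\ge kh_G(x)/2$, and setting $\widehat w$ equal to $w$ on $E(K)$ and $W_x$ on the proxies gives a $k$-loaded admissible weight. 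The degenerate case $kh_G(x)\ge2$ must be handled here. Since $p<1$ because $x\in V(K)$ has core neighbours, such a case is infeasible on both sides. The constraint is consistent with \cref{prop:BM}.

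For the converse, I take a $k$-loaded admissible $\widehat w$, so $W_x/d_x\ge kh_G(x)/2$. I split $W_x$ into pieces $w_{e_i}$ with $w_{e_i}/d_x\ge kt_i/2$, which is possible since the required amounts sum to at most $W_x$. The proportional split $w_{e_i}=W_x t_i/h_G(x)$ works. I then invoke the sufficiency half of the hanging-tree response to fill each $C_i$ with positive weights whose internal curvatures equal $k$, including the boundary edge $e_i$. Core edges keep curvature $\ge k$ by the compression invariance above.

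The main obstacle is the exact statement needed from the hanging-tree result. It must guarantee curvature $\ge k$ on the boundary edge $e_i$ itself, not only inside $C_i$. It must also show that the filling does not depend on, nor alter, the transition data at $x$ beyond $p_x(e_i)$, and that it works when several trees hang at the same $x$. I would treat each $C_i$ separately, with the other trees absorbed into $A_x$ as exterior mass, and verify that the condition at $e_i$ involves $x$ only through $p_x(e_i)$.
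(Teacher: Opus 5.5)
Your proposal is correct and is essentially the paper's proof: core curvatures are unchanged when the first-layer weights at $x$ are aggregated into the proxy edge (the paper cites this as \cref{lem:boundary-aggregation} rather than re-deriving it by a Lipschitz-function argument), the forward direction sums the rooted-tree bound $p_x^w(e_i)\ge kt_i/2$ over the components hanging at $x$, and the converse splits $\widehat w_{xx^\partial}$ and fills each component independently via the converse half of \cref{thm:rooted-response}, which is stated exactly in the form you were worried about (all weights at $x$ fixed, curvature at least $k$ on the boundary edge). The one point to repair is $k=0$, which you did not treat separately: there the filling construction degenerates, since its weights $kt_v w_{e_u}/(2\sigma_u)$ vanish, and an internal pendant edge of a hanging tree always has curvature $2p>0$, never exactly $0$; so either quote the $k=0$ core reduction of \cite{Xia2026InvariantI} as the paper does, or fill the trees at some small level $k'>0$ with $k'h_G(x)\le 2p_x^{\widehat w}(xx^\partial)$ for all $x\in A_G$, which gives tree-edge curvatures at least $k'>0$ while the core edges keep curvature at least $0$.
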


At $k=0$ the proxy-edge load condition is automatic, so this reduces to
the $2$-core reduction of \cite{Xia2026InvariantI}. For $k>0$,
$h_G(x)$ is the exact curvature cost of the entire hanging forest.

We finally quantify how positive lower curvature restricts the tree-like
skeleton defined above.

\begin{theorem}[Quantitative tree-like skeleton]
\label{thm:quantitative-skeleton-intro}
Let $G$ be connected and locally finite, let $0<k<2$, and put
\[
D:=\DN_{\LLY}^{k}(G)<\infty.
\]
Then $\Skel(G)$ is a forest. Every nontrivial component $T$ satisfies
\begin{align}
\DN_{\LLY}^{k}(T)&\le D,\label{eq:skeleton-DNk}\\
|E(T)|&\le\left\lfloor\frac{2}{k}\right\rfloor,\label{eq:skeleton-size}\\
|B(T)|&\le\lceil D\rceil.\label{eq:skeleton-branch}
\end{align}
\end{theorem}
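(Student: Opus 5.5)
Fix a $k$-admissible weight $w$ on $G$ with $\Dist(w)<\infty$. By \cref{prop:BM}, $G$ has diameter at most $2/k$, so $G$ is finite, being connected and locally finite. We may take $\Dist(w)$ as close to $D$ as we like.

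\emph{Forest.} Suppose $\Skel(G)$ contained a cycle. Every edge of that cycle is tree-like, so the cycle lies on no $3$-, $4$- or $5$-cycle; in particular it has length at least $6$. We will derive a contradiction by summing the local inequality \eqref{eq:k-local-intro} around the cycle. For $k>0$, $\Psi_k(a,b)>\Psi_0(a,b)=\sqrt{ab}$ whenever $a+b>0$. Hence each cycle edge $xy$ satisfies $w_{xy}>\sqrt{A_xA_y}$. For a cycle vertex $x$ with cycle neighbours $y,z$ we have $A_x\ge w_{xz}$ at the edge $xy$, and similarly at the other edges. Multiplying the strict inequalities $w_{xy}^2> A_xA_y$ over all cycle edges gives
\[
\prod_{e} w_e^2 > \prod_{x}(A_x^{(1)}A_x^{(2)}),
\]
where $A_x^{(1)},A_x^{(2)}$ are the two excess degrees at $x$. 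Each of these factors is at least the weight of the other cycle edge at $x$, so the right-hand side is at least $\prod_e w_e^2$. This is a contradiction.

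This is the expected obstacle: one must check that the argument of \cite{Xia2026InvariantI} (presumably the $k=0$ version gives $\ge$, with equality forcing structure) really only needs the local formula \eqref{eq:k-local-intro} on tree-like edges. It does here, since all cycle edges are tree-like.

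\emph{Restriction to components.} Let $T$ be a nontrivial component of $\Skel(G)$, and let $x,y\in V(T)$ be adjacent in $T$. In $G$ the excess degrees satisfy $A_x^G\ge A_x^T$, where $A^T$ is computed with $w|_{E(T)}$. Since $\Psi_k$ is nondecreasing in each argument (visible from \eqref{eq:Psi-intro}, or by monotonicity of the curvature condition), we get
\[
w_{xy}\ge\Psi_k(A_x^G,A_y^G)\ge\Psi_k(A_x^T,A_y^T).
\]
The edge $xy$ is tree-like in $T$ as well, since $T$ is a tree. Thus $w|_{E(T)}$ is $k$-admissible on $T$. Its distortion is at most $\Dist(w)$, because sup and inf are taken over a subset of edges. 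Letting $\Dist(w)\downarrow D$ gives \eqref{eq:skeleton-DNk}.

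\emph{Size and branch bounds.} Since $\DN_{\LLY}^k(T)\le D<\infty$, \cref{thm:tree-profile-intro} gives $k\le 2/|E(T)|$, that is, $|E(T)|\le 2/k$. As $|E(T)|$ is an integer, \eqref{eq:skeleton-size} follows.

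For \eqref{eq:skeleton-branch} it suffices to show $|B(T)|\le \DN_{\LLY}^{k}(T)$, since $|B(T)|$ is an integer and $\DN_{\LLY}^{k}(T)\le D$. Monotonicity in $k$ from \cref{thm:tree-profile-intro} gives $\DN_{\LLY}^{0}(T)\le\DN_{\LLY}^k(T)$. So it is enough to prove $|B(T)|\le\DN_{\LLY}(T)$. This is presumably the branch-vertex bound from \cite{Xia2026InvariantI}, which we would cite.

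If it must be reproved, take a normalized $0$-optimal weight $w$ with maximum $M$ and use $w_{xy}\ge\sqrt{A_xA_y}$. Pick an edge of maximal weight and follow the branch structure: each branch vertex contributes at least an additional unit of excess degree. Propagating the inequality along the path of branch vertices, with the leaf edges having weight at least $1$, shows that the heaviest edge is at least the number of branch vertices. This induction (removing a leaf branch and comparing fixed points) would be the fallback, and is the second most delicate step.
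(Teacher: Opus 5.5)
\textbf{Gap in the branch-vertex step.} Your reduction targets a false inequality. You say it suffices to show $|B(T)|\le\DN_{\LLY}^{k}(T)$, and then that it is enough to prove $|B(T)|\le\DN_{\LLY}(T)$. Neither holds in general. The bound available from \cite{Xia2026InvariantI} is only \eqref{eq:partI-branch-bound}, namely $|B(T)|\le\lceil\DN_{\LLY}(T)\rceil$.

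Here is a counterexample. Let $T$ have spine $x_1x_2x_3$, with two leaves at $x_1$, one at $x_2$, and two at $x_3$, so $|B(T)|=3$. Give the five pendant edges weight $1$ and both spine edges weight $1+\sqrt3$.
\begin{itemize}
\item On $x_1x_2$ one has $A_{x_1}=2$ and $A_{x_2}=2+\sqrt3$. Since $(1+\sqrt3)^2=2(2+\sqrt3)$, the curvature is exactly $0$ by \cref{lem:k-restoration}.
\item The same holds on $x_2x_3$.
\item Each pendant edge has curvature $2/d_x^w>0$.
\end{itemize}
Hence $\DN_{\LLY}(T)\le1+\sqrt3<3$. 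By continuity of $k\mapsto\DN_{\LLY}^{k}(T)$, also $\DN_{\LLY}^{k}(T)<3$ for all small $k>0$. Taking $G=T$ therefore refutes both of your intermediate targets. Your fallback claim, that the heaviest edge weighs at least $|B(T)|$, fails on the same tree.

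Your integrality remark also runs the wrong way. From $|B|\le X$ one gets $|B|\le\lfloor X\rfloor$, but nothing turns $|B|\le\lceil X\rceil$ into $|B|\le X$. The repair is to cite the correct bound and use monotonicity of the ceiling:
\[
|B(T)|\le\lceil\DN_{\LLY}(T)\rceil\le\lceil\DN_{\LLY}^{k}(T)\rceil\le\lceil D\rceil .
\]
This is exactly the paper's argument.

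\textbf{The remaining steps are sound.} Finiteness via \cref{prop:BM}, the restriction argument (which is \cref{thm:k-tree-like-comparison} reproved inline), and the size bound all coincide with the paper.

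Your forest argument is correct and genuinely different from the paper's. The paper passes to $k=0$ and invokes the forest obstruction of \cite{Xia2026InvariantI}: either $\Skel(G)$ is a forest or $G=C_n$ with $n\ge6$. It then excludes $C_n$ with the Lin--Liu identity \eqref{eq:total-curvature}. You instead use the strict inequality $\Psi_k(a,b)>\sqrt{ab}$ for $k>0$ and $a+b>0$, multiplied around a cycle of tree-like edges. This gives the contradiction directly from \cref{lem:tree-like-formula}, with no appeal to Part~I, to total curvature, or to finiteness of $G$.

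Summing \eqref{eq:share-curvature} around the cycle gives the same contradiction even faster. The two cycle-edge shares at each cycle vertex sum to at most $1$, so the total is at most $n$, while the curvature condition forces at least $n(1+k/2)$. The paper's route is shorter only because it reuses the $k=0$ classification.
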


\begin{corollary}[Girth at least six]
\label{cor:girth6-intro}
Let $G$ be a finite connected graph with $E(G)\ne\varnothing$ and girth
at least $6$, and let $k>0$. Then
\begin{equation}
\label{eq:girth6-classification-intro}
\DN_{\LLY}^{k}(G)<\infty
\quad\Longleftrightarrow\quad
G\text{ is a tree and }k\le\frac{2}{|E(G)|}.
\end{equation}
\end{corollary}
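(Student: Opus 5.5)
We prove the two directions of \eqref{eq:girth6-classification-intro} separately. The ``if'' direction is immediate from \cref{thm:tree-profile-intro}. If $G$ is a tree with $m=|E(G)|\ge1$ edges and $0<k\le 2/m$, then \eqref{eq:tree-feasible-range-intro} gives $\DN_{\LLY}^{k}(G)<\infty$. The case $m=1$ is covered by $\DN_{\LLY}^{k}(K_2)=1$ for $k\le2$.

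For the ``only if'' direction, assume $\DN_{\LLY}^{k}(G)<\infty$ with $k>0$. First we rule out $k\ge2$. On any edge the LLY curvature of a weighted graph with combinatorial distance is at most $2$. Equality, or a value above what a tree-like edge allows, is impossible unless the structure is very restricted. The cleanest route is the following. Since $G$ is finite and connected with an edge, \cref{prop:BM} yields $\operatorname{diam}G\le 2/k$. If $k\ge 2$ this forces diameter $1$, so $G=K_2$ (girth $\ge6$ excludes larger complete graphs), and then the claim holds with $|E(G)|=1$, provided we check that $k\le 2$ is necessary on $K_2$. This follows from the general upper bound $\kappa\le 2$ recalled in \cref{subsec:weighted-LLY}. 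So we may assume $0<k<2$.

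Since $G$ has girth at least $6$, no edge lies on a cycle of length $3$, $4$, or $5$. Hence every edge is tree-like, $\Etl(G)=E(G)$, and $\Skel(G)=G$. Now apply \cref{thm:quantitative-skeleton-intro}: $\Skel(G)$ is a forest, so $G$ itself is a forest. Being connected, $G$ is a tree. It is the unique nontrivial component of its skeleton, so \eqref{eq:skeleton-size} gives $|E(G)|\le\lfloor 2/k\rfloor\le 2/k$, that is, $k\le 2/|E(G)|$. Alternatively, once $G$ is known to be a tree, \eqref{eq:tree-feasible-range-intro} directly yields $k\le 2/m$.

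The only delicate point is the boundary regime $k\ge2$, where \cref{thm:quantitative-skeleton-intro} is not stated. Here we rely on \cref{prop:BM}, together with the fact that girth $\ge6$ and diameter $1$ force $G=K_2$, and on the a priori bound $\kappa_{\LLY}^w\le 2$. Everything else is a direct combination of the cited results, with the key observation being that girth $\ge6$ makes the tree-like skeleton equal to $G$.
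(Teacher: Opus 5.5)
Your proof is correct, but it takes a longer road than the paper's. The paper argues directly from the Lin--Liu total-curvature identity \eqref{eq:total-curvature}. A $k$-admissible weight gives $k|E(G)|\le 2(|V(G)|-|E(G)|)$, so $k>0$ forces $|E(G)|<|V(G)|$, and connectedness then makes $G$ a tree. The bound $k\le 2/|E(G)|$ comes from the feasible range in \cref{thm:tree-profile-intro}, which is itself that same identity. This works uniformly for every $k>0$, so no case split at $k=2$ is needed.

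You instead observe that girth at least $6$ gives $\Skel(G)=G$ and invoke \cref{thm:quantitative-skeleton-intro}. This shows nicely that the corollary is a special case of the skeleton theorem. However, that theorem rests on the forest obstruction from \cite{Xia2026InvariantI}, and it uses \eqref{eq:total-curvature} anyway to exclude cycles. Its hypothesis $k<2$ is also what forces your separate Bonnet--Myers argument for $k\ge2$. That argument is fine: diameter at most $1$ plus girth at least $6$ gives $K_2$.

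Two small corrections in that side case. The paper does not recall a general bound $\kappa\le2$ in \cref{subsec:weighted-LLY}. What you need follows immediately from \cref{lem:tree-like-formula}, which gives curvature exactly $2$ on $K_2$, or from \cref{prop:tree-feasible-range} with $m=1$. The vague sentence about ``equality, or a value above what a tree-like edge allows'' is not used and can simply be dropped.
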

Together with the finite and infinite high-girth classifications at
$k=0$ in \cite{Xia2026InvariantI}, this completes the locally finite
high-girth picture for all $k\ge0$.

\bigskip

\paragraph{Prescribed-flow interpretation.}
For a finite tree $T$ with $m$ edges, $w_T^{\star,2/m}$ is the normalized
constant-curvature weight. Lin and Liu \cite[Theorem~3.2]{LinLiu2026}
show that the prescribed flow with target $2/m$ converges exponentially
to its scaling class. Thus
\[
\DN_{\LLY}^{2/m}(T)
=
\Dist\bigl(w_T^{\star,2/m}\bigr),
\]
while for $0\le k<2/m$, $\DN_{\LLY}^{k}(T)$ gives the least distortion
needed to make every edge curvature at least $k$. Related discrete
Einstein metrics on trees are studied in
\cite{BaiChengHua2026,Cheng2026EinsteinTrees} under a different
weighted-distance convention.

\bigskip

\paragraph{Organization.}
Section~\ref{sec:prelim} fixes notation, recalls weighted LLY curvature,
and records the specific results used from Lin--Liu and
\cite{Xia2026InvariantI}. Section~\ref{sec:positive-core} proves
Bonnet--Myers (\cref{prop:BM}) and the loaded $2$-core reduction (\cref{thm:loaded-core-intro}).
Section~\ref{sec:finite-trees} develops the finite-tree profile (\cref{thm:tree-profile-intro}), and
Section~\ref{sec:skeleton} treats tree-like skeletons (\cref{thm:quantitative-skeleton-intro}) and high-girth
graphs (\cref{cor:girth6-intro}).

\section{Preliminaries}
\label{sec:prelim}

Graphs are simple and undirected throughout. The invariant
$\DN_{\LLY}^k$ is considered only for connected graphs; if such a graph
is infinite, local finiteness is always assumed. Except where explicitly
stated otherwise, the graphs below are finite.

For $G=(V,E)$, vertices of degree $1$ are called \emph{leaves},
and edges incident with leaves are \emph{pendant}. The combinatorial
distance between $x,y\in V$ is denoted by $d_G(x,y)$, or by $d(x,y)$ when no ambiguity can
arise. We adopt the convention that forests have infinite girth.

\subsection{Weighted Lin--Lu--Yau curvature}
\label{subsec:weighted-LLY}

Fix a positive weight $w:E\to(0,\infty)$. For $\alpha\in[0,1]$, let
\begin{equation}
\label{eq:mu}
\mu_x^{\alpha,w}(z)=
\begin{cases}
\alpha, & z=x,\\[2mm]
(1-\alpha)\dfrac{w_{xz}}{d_x^w}, & z\sim x,\\[3mm]
0, & \text{otherwise}.
\end{cases}
\end{equation}
If $\mu$ and $\nu$ are finitely supported probability measures on
$V$, denote their couplings by $\Pi(\mu,\nu)$. With the combinatorial
distance as cost, set
\[
W_1(\mu,\nu)
:=\inf_{\pi\in\Pi(\mu,\nu)}
\sum_{u,v\in V}d(u,v)\pi(u,v).
\]
For an edge $x\sim y$, define
\begin{equation}\label{eq:kappa_alpha_W}
    \kappa_\alpha^w(x,y)
:=1-W_1(\mu_x^{\alpha,w},\mu_y^{\alpha,w}),
\end{equation}
and
\begin{equation}\label{eq:kappa_LLY}
    \kappa_{\LLY}^w(x,y)
:=\lim_{\alpha\to1^-}\frac{\kappa_\alpha^w(x,y)}{1-\alpha}.
\end{equation}
The definitions above use the standard idleness convention of
\cite{LinLuYau2011,BCLMP2018} and the normalized weighted-Laplacian
setting of M\"unch and Wojciechowski \cite{MunchWojciechowski2019}.
For the unit weight $\mathbf1$ we suppress the superscript $w$, and we
write $\kappa_{\LLY}^{w,G}$ when the ambient graph needs to be shown.

Because the transition probabilities depend only on ratios of edge
weights, multiplying all weights by the same positive constant leaves the
curvature unchanged:
\begin{equation}
\label{eq:scale}
\kappa_{\LLY}^{cw}(e)=\kappa_{\LLY}^{w}(e)
\qquad(c>0).
\end{equation}
For a finite graph $G$, the map
\[
w\longmapsto
\bigl(\kappa_{\LLY}^w(e)\bigr)_{e\in E(G)}
\]
is locally Lipschitz on $(0,\infty)^{E(G)}$ 
\cite[Lemma~2.2]{LinLiu2026}.

For tree-like edges we use the local identity of
M\"unch and Wojciechowski
\cite[Example~2.3]{MunchWojciechowski2019}; see also
\cite[Eq.~(8)]{LinLiu2026}.

\begin{lemma}[Tree-like edge formula]
\label{lem:tree-like-formula}
Let $G$ be connected and locally finite, let $w$ be a positive edge
weight, and let $xy\in\Etl(G)$. Then
\begin{equation}
\label{eq:tree-like-curvature}
\kappa_{\LLY}^w(x,y)
=2w_{xy}\left(\frac1{d_x^w}+\frac1{d_y^w}\right)-2.
\end{equation}
\end{lemma}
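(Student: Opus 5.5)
We compute the transport cost directly at small idleness and then pass to the limit \eqref{eq:kappa_LLY}. Write $p_{xz}=w_{xz}/d_x^w$ and $p_{yz}=w_{yz}/d_y^w$. For $\alpha$ close to $1$, the measure $\mu_x^{\alpha,w}$ places mass $\alpha$ at $x$, mass $(1-\alpha)p_{xy}$ at $y$, and mass $(1-\alpha)p_{xz}$ at each $z\in N_G(x)\setminus\{y\}$. The measure $\mu_y^{\alpha,w}$ is described symmetrically. Because $xy$ is tree-like, there are no triangles through $xy$, so $N_G(x)\cap N_G(y)=\varnothing$. There are also no $4$-cycles through $xy$, so for $z\in N(x)\setminus\{y\}$ and $z'\in N(y)\setminus\{x\}$ the vertices $z$ and $z'$ are not adjacent. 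Finally, there are no $5$-cycles through $xy$, so $z$ and $z'$ have no common neighbour. Altogether $d(z,z')=3$, since the path $z\,x\,y\,z'$ realizes distance $3$ and we have just excluded distances $1$ and $2$.

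For the upper bound on $W_1$, we use an explicit coupling. Keep the mass $\min\{\alpha+(1-\alpha)p_{yx},\ \alpha+(1-\alpha)p_{xy}\}$-type overlaps fixed at $x$ and $y$. Move the mass $(1-\alpha)p_{xz}$ at $z\in N(x)\setminus\{y\}$ to $x$ at cost $1$, and fill the neighbours of $y$ from $y$ at cost $1$. The leftover discrepancy between the masses at $x$ and at $y$ is moved across the edge $xy$ at cost $1$. Bookkeeping gives
\[
W_1\le (1-\alpha)(1-p_{xy})+(1-\alpha)(1-p_{yx})+\bigl|\alpha-(1-\alpha)p_{yx}-(1-\alpha)(1-p_{xy})\bigr|.
\]
For $\alpha$ near $1$ the absolute value equals $\alpha-(1-\alpha)(1+p_{yx}-p_{xy})$, and therefore
\[
W_1\le 1-(1-\alpha)\bigl(2p_{xy}+2p_{yx}-2\bigr)+O\bigl((1-\alpha)^2\bigr).
\]
The exact combination should collapse to $1-(1-\alpha)(2p_{xy}+2p_{yx}-2)$, possibly with no error term. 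Dividing by $1-\alpha$ and letting $\alpha\to1$ then yields $\kappa\ge 2(p_{xy}+p_{yx})-2$, which is \eqref{eq:tree-like-curvature}, since $p_{xy}=w_{xy}/d_x^w$ and $p_{yx}=w_{xy}/d_y^w$.

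For the matching lower bound on $W_1$, we use Kantorovich duality with a $1$-Lipschitz test function. Set $f=0$ on $N(x)\setminus\{y\}$, $f=1$ at $x$, $f=2$ at $y$, and $f=3$ on $N(y)\setminus\{x\}$. Extend $f$ to the rest of $V$, which is irrelevant because of finite support, as a McShane extension, for instance by truncating $f(\cdot)=\min$ over the defined points of $f(v)+d(\cdot,v)$. This $f$ is $1$-Lipschitz on its defining set precisely because $d(z,z')=3$, the two neighbour sets are disjoint, and each neighbour is at distance $1$ from its centre and distance $2$ from the other centre. Evaluating $\sum f\,d\mu_y-\sum f\,d\mu_x$ gives exactly the same expression as the coupling bound, so the two bounds agree.

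The main obstacle is verifying that $f$ is $1$-Lipschitz, which is exactly where all three cycle exclusions (lengths $3$, $4$ and $5$) are used. A secondary point is sign care in the mass at $x$ versus $y$ for $\alpha$ near $1$. Local finiteness ensures that the measures are finitely supported. Alternatively, the whole computation can simply be cited from \cite[Example~2.3]{MunchWojciechowski2019}.
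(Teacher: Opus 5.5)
The paper does not prove this lemma; it quotes it from \cite[Example~2.3]{MunchWojciechowski2019} (see also \cite[Eq.~(8)]{LinLiu2026}). Your direct route, an explicit coupling for the upper bound on $W_1$ and a $1$-Lipschitz potential for the lower bound, is the natural self-contained verification. Its structural half is right. The exclusions of $3$-, $4$- and $5$-cycles give disjointness of $N(x)\setminus\{y\}$ and $N(y)\setminus\{x\}$, together with $d(z,y)=d(x,z')=2$ and $d(z,z')=3$. So your $f$ is $1$-Lipschitz on the union of the supports. Weak duality then suffices, with no extension needed. With $\beta=1-\alpha$, $p=p_{xy}$, $q=p_{yx}$, it gives
\[
W_1\bigl(\mu_x^{\alpha,w},\mu_y^{\alpha,w}\bigr)\ge(2+\beta-2\beta q)-(1-\beta+2\beta p)=1-\beta(2p+2q-2).
\]

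The coupling half, however, fails as written. For $\alpha$ near $1$ your displayed bound equals
\[
\beta(1-p)+\beta(1-q)+\alpha-\beta q-\beta(1-p)=1-2\beta q,
\]
which is smaller than the true value by $2\beta(1-p)$. Take the middle edge of the unit-weight path $P_4$, where $p=q=\tfrac12$. Your bound would give $W_1\le1-\beta$, i.e.\ curvature at least $1$. This contradicts your own dual bound $W_1\ge1$, and the lemma, which gives curvature $0$. So the ``therefore'' line does not follow, and the dual value does not match the bound you displayed.

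The slip is a sign. Once the mass $\beta(1-p)$ on $N(x)\setminus\{y\}$ has been moved to $x$, the mass at $x$ is $\alpha+\beta(1-p)$, and $\beta q$ of it stays. (The overlaps are $\beta q$ at $x$ and $\beta p$ at $y$, not the ``$\min\{\alpha+\dots\}$'' quantities you wrote.) So the amount crossing $xy$ is $\alpha+\beta(1-p)-\beta q=1-\beta(p+q)$, which is nonnegative for $\alpha\ge\tfrac12$. The cost is then
\[
\beta(1-p)+\bigl(1-\beta(p+q)\bigr)+\beta(1-q)=1-\beta(2p+2q-2).
\]
This is exactly the dual value, with no error term. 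Hence $\kappa_\alpha^w(x,y)=(1-\alpha)\bigl(2(p+q)-2\bigr)$ for all $\alpha\in[\tfrac12,1)$, and the limit \eqref{eq:kappa_LLY} is immediate.

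With this correction your argument is a complete proof. It gives what the citation does not: it shows exactly where each of the three cycle exclusions enters, and that $\kappa_\alpha^w/(1-\alpha)$ is already constant before the limit.
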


For an incidence of $e$ with $x$, put
\begin{equation}
\label{eq:edge-share}
p_x^w(e):=\frac{w_e}{d_x^w}.
\end{equation}
Thus, on a tree-like edge $e=xy$,
\begin{equation}
\label{eq:share-curvature}
\kappa_{\LLY}^w(e)\ge k
\quad\Longleftrightarrow\quad
p_x^w(e)+p_y^w(e)\ge1+\frac{k}{2}.
\end{equation}

\subsection{The \texorpdfstring{$k$}{k}-LLY-distortion number}

For $k\ge0$, $\DN_{\LLY}^{k}(G)$ is defined by
\eqref{eq:DNk-intro} for connected locally finite graphs only. No
curvature-distortion number is assigned to a disconnected graph in this
paper. Recall that the value is $\infty$ if no finite-distortion $k$-admissible
weight exists, and a
$k$-admissible weight is \emph{$k$-optimal} if its distortion equals
$\DN_{\LLY}^{k}(G)$. When $E(G)\ne\varnothing$, \eqref{eq:scale}
allows every finite-distortion weight to be normalized by
$\inf_{e\in E} w_e=1$; for finite $G$, this is equivalent to
$\min_{e\in E} w_e=1$.

The attainment argument is the positive-$k$ analogue of the finite-graph
compactness argument in \cite[Proposition~2.5]{Xia2026InvariantI}.

\begin{proposition}[Attainment and monotonicity]
\label{prop:attainment-monotonicity}
Let $G=(V,E)$ be finite and connected. If $\DN_{\LLY}^{k}(G)<\infty$, then
the infimum is attained. If $0\le k_1\le k_2$, then
\begin{equation}
\label{eq:k-monotonicity-general}
\DN_{\LLY}^{k_1}(G)\le\DN_{\LLY}^{k_2}(G).
\end{equation}
\end{proposition}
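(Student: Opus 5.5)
Monotonicity is immediate from the definition, so I treat it first. If $0\le k_1\le k_2$ and $w$ is $k_2$-admissible, then $\kappa_{\LLY}^w(e)\ge k_2\ge k_1$ for every edge. Hence the admissible set for $k_2$ is contained in the admissible set for $k_1$, and the infimum over the larger set is no larger. This gives \eqref{eq:k-monotonicity-general}, with the convention $\infty\le\infty$ when no finite-distortion $k_2$-admissible weight exists. The cases $E=\varnothing$ (that is, $G=K_1$) and the value $1$ are trivial.

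For attainment, assume $D:=\DN_{\LLY}^{k}(G)<\infty$ and $E\ne\varnothing$. I will use the compactness argument of \cite[Proposition~2.5]{Xia2026InvariantI}.

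1. Choose $k$-admissible weights $w^{(n)}$ with $\Dist(w^{(n)})\downarrow D$.
2. By the scale invariance \eqref{eq:scale}, normalize so that $\min_e w^{(n)}_e=1$. Each $w^{(n)}$ stays $k$-admissible, and it lies in $[1,\Dist(w^{(n)})]^{E}\subset[1,D+1]^E$ for large $n$.
3. Because $E$ is finite, this box is a compact subset of $(0,\infty)^E$. Pass to a convergent subsequence $w^{(n)}\to w^\ast\in[1,D+1]^E$. The limit is a genuinely positive weight, because the normalization $\min_e w_e^{(n)}=1$ keeps every coordinate bounded below by $1$, so no weight can degenerate to $0$.
4. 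The map $w\mapsto(\kappa_{\LLY}^w(e))_e$ is locally Lipschitz, hence continuous, on $(0,\infty)^{E}$ \cite[Lemma~2.2]{LinLiu2026}. Passing to the limit in $\kappa_{\LLY}^{w^{(n)}}(e)\ge k$ gives $\kappa_{\LLY}^{w^\ast}(e)\ge k$ for every $e$, so $w^\ast$ is $k$-admissible.
5. On finite $E$ the distortion $w\mapsto\max_e w_e/\min_e w_e$ is continuous on $(0,\infty)^E$. Therefore $\Dist(w^\ast)=\lim\Dist(w^{(n)})=D$, and $w^\ast$ is $k$-optimal.

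The only step needing care is the positivity of the limit in step 3, since continuity of curvature is available only on the open orthant. The normalization resolves this, as noted there, so I do not anticipate a genuine obstacle.
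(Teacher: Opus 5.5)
Your proof is correct and follows the same route as the paper: you normalize by $\min_e w_e=1$, confine the admissible weights to a compact box inside the open orthant, and use continuity of $w\mapsto(\kappa_{\LLY}^w(e))_e$ to show that the curvature constraint survives the limit. The only difference is cosmetic: you extract a convergent minimizing sequence, whereas the paper notes that the feasible set $\mathcal A_M$ is compact and applies the extreme value theorem to $\max_e w_e$; monotonicity is handled identically, by inclusion of the admissible sets.
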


\begin{proof}
The case $G=K_1$ is immediate. Assume $E(G)\ne\varnothing$. Choose a
$k$-admissible weight $w_0$ of finite distortion and normalize it so that
$\min_e w_{0,e}=1$. Put $M=\Dist(w_0)$. It suffices to minimize
over the nonempty set
\[
\mathcal A_M:=\{w\in[1,M]^E:\min_e w_e=1,\ 
\kappa_{\LLY}^w(e)\ge k\text{ for every }e\in E\}.
\]
By continuity of
$w\mapsto(\kappa_{\LLY}^w(e))_{e\in E(G)}$, $\mathcal A_M$ is closed and hence
compact. On $\mathcal A_M$, $\Dist(w)=\max_{e\in E} w_e$, a continuous
function, so the infimum is attained. The monotonicity follows because
every $k_2$-admissible weight is $k_1$-admissible.
\end{proof}

\subsection{Results used from Lin--Liu and the preceding paper}
\label{subsec:known-high-girth}

For a finite connected graph of girth at least $6$, Lin and Liu prove
\cite[Proposition~3.1]{LinLiu2026}
\begin{equation}
\label{eq:total-curvature}
\sum_{e\in E(G)}\kappa_{\LLY}^w(e)
=2(|V(G)|-|E(G)|).
\end{equation}
For $\Omega\subseteq V(G)$, write
\[
E(\Omega):=\{uv\in E(G):u,v\in\Omega\}.
\]
They also show \cite[Theorem~3.1]{LinLiu2026} that a positive
constant-curvature weight exists exactly when
\begin{equation}
\label{eq:LinLiu-density}
\max_{\varnothing\ne\Omega\subsetneq V(G)}
\frac{|E(\Omega)|}{|\Omega|}
<\frac{|E(G)|}{|V(G)|}.
\end{equation}
In that case the constant equals
\begin{equation}
\label{eq:average-curvature}
\overline\kappa
=2\left(\frac{|V(G)|}{|E(G)|}-1\right).
\end{equation}
In particular, every finite tree with $m$ edges satisfies the density
condition and hence admits a positive weight of constant curvature $2/m$.
Indeed, for every proper nonempty $\Omega\subsetneq V(T)$,
\[
\frac{|E(\Omega)|}{|\Omega|}
\le 1-\frac1{|\Omega|}
<1-\frac1{m+1}
=\frac{m}{m+1}=\frac{|E(T)|}{|V(T)|}.
\]

We shall also use three results from our preceding paper
\cite{Xia2026InvariantI}. First, its forest obstruction states that if
$\DN_{\LLY}(G)<\infty$, then either $G=C_n$ for some $n\ge6$ or
$\Skel(G)$ is a forest. Second, its tree-like subgraph comparison gives,
for every connected subgraph $H\subseteq G$ with
$E(H)\subseteq\Etl(G)$,
\begin{equation}
\label{eq:partI-tree-like-comparison}
\DN_{\LLY}(H)\le\DN_{\LLY}(G).
\end{equation}
Third, for every finite tree $T$, its branch-vertex bound gives
\begin{equation}
\label{eq:partI-branch-bound}
|B(T)|\le\left\lceil\DN_{\LLY}(T)\right\rceil.
\end{equation}

\subsection{The \texorpdfstring{$2$}{2}-core and boundary loads}

\begin{definition}\label{def:2-core}
For a finite graph $G$, repeatedly remove vertices whose current degree
is $0$ or $1$. The subgraph that remains (possibly empty) is the
\emph{$2$-core} $\core(G)$; equivalently, it is the unique maximal
subgraph of $G$ having minimum degree at least $2$. We call the vertices in $\core(G)$ the \emph{core vertices}.
\end{definition}

Let $G$ be finite and connected with nonempty $2$-core
$K:=\core(G)$. Every component of $G-V(K)$ is a tree attached to a
unique core vertex by one edge; we call these components the
\emph{hanging components} of $G$. Recall that
\[
A_G=\{x\in V(K):N_G(x)\setminus V(K)\ne\varnothing\},
\]
and for every $x\in A_G$, adjoin a new pendant vertex $x^\partial$ adjacent
to $x$; the resulting graph is denoted by $\widehat K_G$.

The following aggregation identity was proved in our preceding paper
\cite{Xia2026InvariantI} and will be used below.

\begin{lemma}\cite[Lemma~3.4]{Xia2026InvariantI}
\label{lem:boundary-aggregation}
Let $G$ be finite and connected with nonempty $2$-core
$K:=\core(G)$, and let $A_G$ and $\widehat K_G$ be as above. Let $w$
be a positive edge weight on $G$. For $x\in A_G$, put
\[
b_x:=\sum_{u\in N_G(x)\setminus V(K)}w_{xu},
\]
and give $\widehat K_G$ the weight
\[
\widehat w_e=w_e\quad(e\in E(K)),
\qquad
\widehat w_{xx^\partial}=b_x.
\]
Then every core edge $x\sim y$ satisfies
\begin{equation}
\label{eq:boundary-aggregation}
\kappa_{\LLY}^{w,G}(x,y)
=
\kappa_{\LLY}^{\widehat w,\widehat K_G}(x,y).
\end{equation}
\end{lemma}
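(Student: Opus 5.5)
The plan is to show the stronger statement that the two $W_1$-distances agree for every idleness $\alpha\in[0,1)$:
\[
W_1^G\bigl(\mu_x^{\alpha,w},\mu_y^{\alpha,w}\bigr)=W_1^{\widehat K_G}\bigl(\mu_x^{\alpha,\widehat w},\mu_y^{\alpha,\widehat w}\bigr).
\]
Then $\kappa_\alpha$ coincides on both sides, and \eqref{eq:boundary-aggregation} follows by passing to the limit in \eqref{eq:kappa_LLY}.

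First I compare the measures. For a core vertex $x$, set $H_x:=N_G(x)\setminus V(K)$. Then $d_x^{w}=d_x^{\widehat w}$, because the edges from $x$ to $H_x$ are replaced by a single proxy edge of weight $b_x=\sum_{u\in H_x}w_{xu}$. Hence the masses of $\mu_x^{\alpha,w}$ and $\mu_x^{\alpha,\widehat w}$ agree at $x$ and at every core neighbour of $x$. The total mass that $\mu_x^{\alpha,w}$ puts on $H_x$ equals the mass that $\mu_x^{\alpha,\widehat w}$ puts on $x^\partial$. The same holds at $y$.

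Next I compare distances on the relevant supports. Let $S$ be the union of the supports of $\mu_x,\mu_y$ in $G$, and $\widehat S$ the corresponding set in $\widehat K_G$. I collapse each $u\in H_x$ to $x^\partial$ and each $u\in H_y$ to $y^\partial$; every other point of $S$ is fixed. The claim is that this collapsing map $\phi\colon S\to\widehat S$ preserves distances.
\begin{itemize}
\item Between core vertices: every hanging component is a tree attached by a single edge, so it is a dead end. Any walk between core vertices that enters it must return through the same edge, so a shortest path can be shortened to stay in $K$. Thus $d_G=d_K=d_{\widehat K_G}$ on core vertices.
\item From a hanging neighbour $u\in H_x$: every path from $u$ passes through $x$, so $d_G(u,z)=1+d_G(x,z)$ for every core vertex $z$. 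The same identity holds for $x^\partial$ in $\widehat K_G$.
\item Between hanging neighbours: $d_G(u,u')=2$ for distinct $u,u'\in H_x$, which is harmless since $\phi$ merges them. For $u\in H_x$ and $v\in H_y$, $d_G(u,v)=3=d_{\widehat K_G}(x^\partial,y^\partial)$.
\end{itemize}
Here $u\in H_x$ is never adjacent to $y$ or to any other core vertex. I also check the case $x\in A_G$, $y\notin A_G$ and the symmetric cases, which are the same computation.

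Finally I transfer couplings in both directions.
\begin{itemize}
\item Any coupling $\pi$ on $S\times S$ pushes forward under $\phi\times\phi$ to a coupling of the $\widehat K_G$ measures with the same cost, since $d\circ(\phi\times\phi)=d$ on $S\times S$. This gives $W_1^{\widehat K}\le W_1^G$.
\item Conversely, given a coupling $\widehat\pi$ in $\widehat K_G$, split the mass in any row or column indexed by $x^\partial$ among $u\in H_x$ proportionally to $w_{xu}/b_x$, and likewise at $y^\partial$. This produces a coupling of the $G$ measures. All points of $H_x$ have the same distance to every point of $S\setminus H_x$, equal to the distance from $x^\partial$, so the cost is unchanged. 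This gives $W_1^G\le W_1^{\widehat K}$.
\end{itemize}

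The main obstacle is the distance bookkeeping in the second step. The delicate claims are that shortest paths between core points never benefit from the hanging trees, and that all hanging neighbours of a given core vertex have identical distance rows to the rest of the support. Once these are checked, the coupling transfer is routine.
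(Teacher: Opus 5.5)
Your proof is correct: the measure comparison, the distance bookkeeping, and the two-way coupling transfer give equality of $W_1$ for every $\alpha\in[0,1)$, and hence \eqref{eq:boundary-aggregation}. This paper gives no proof of the lemma; it only imports it from \cite[Lemma~3.4]{Xia2026InvariantI}. So there is no in-paper argument to compare your route against.

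One line should be corrected. The identity $d_{\widehat K_G}\circ(\phi\times\phi)=d_G$ on $S\times S$ is false as stated. For distinct $u,u'\in H_x$ one has $d_G(u,u')=2$, while $\phi(u)=\phi(u')=x^\partial$. This is harmless, but the reason is not the merging itself. It is that no coupling ever charges such pairs. Since $\mu_y^{\alpha,w}$ vanishes on $H_x$ and $\mu_x^{\alpha,w}$ vanishes on $H_y$, every $\pi\in\Pi(\mu_x^{\alpha,w},\mu_y^{\alpha,w})$ is zero on $H_x\times H_x$ and on $H_y\times H_y$. Alternatively, use only $d_{\widehat K_G}\circ(\phi\times\phi)\le d_G$, which is all the pushforward direction needs.

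The same observation should be stated in the lifting step. Because $\mu_y^{\alpha,\widehat w}(x^\partial)=0$ and $\mu_x^{\alpha,\widehat w}(y^\partial)=0$, the coupling $\widehat\pi$ has mass only in the row at $x^\partial$ and only in the column at $y^\partial$. The entry $\widehat\pi(x^\partial,y^\partial)$ is split by the product weights $w_{xu}w_{yv}/(b_xb_y)$. Consequently the lift puts no mass on pairs inside a single fibre, which your ``identical distance rows'' argument implicitly requires.
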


\section{Positive curvature and core reduction}
\label{sec:positive-core}

\subsection{Bonnet--Myers and infinite graphs}

The discrete Bonnet--Myers argument in this setting follows the standard proof; see
\cite[Proposition~23]{Ollivier2009} and
\cite[Theorem~4.1]{LinLuYau2011}.
We include the proof for completeness, noting that the transportation
cost is given by the combinatorial graph distance.

\begin{proof}[Proof of \cref{prop:BM}]
Let $x=x_0,\ldots,x_n=y$ be a shortest path from $x$ to $y$, and fix
$0<\varepsilon<k$. Since, by \eqref{eq:kappa_LLY},
\[
\kappa_{\LLY}^w(x_{i-1},x_i)
=
\lim_{\alpha\to1^-}
\frac{\kappa_\alpha^w(x_{i-1},x_i)}{1-\alpha}
\ge k
\]
for every $i=1,\ldots,n$, for $\alpha<1$ sufficiently close to $1$ we have,
simultaneously for all $i$,
\[
\frac{\kappa_\alpha^w(x_{i-1},x_i)}{1-\alpha}
\ge k-\varepsilon.
\]
Combining with \eqref{eq:kappa_alpha_W}, we have
\[
W_1(\mu_{x_{i-1}}^{\alpha,w},\mu_{x_i}^{\alpha,w})
\le
1-(1-\alpha)(k-\varepsilon).
\]
Writing $\delta_z$ for the Dirac measure at $z$, we have
$W_1(\delta_z,\mu_z^{\alpha,w})=1-\alpha$. Hence, by the triangle inequality,
\[
\begin{aligned}
d(x,y)=W_1(\delta_x,\delta_y)\le
W_1(\delta_x,\mu_x^{\alpha,w})
+\sum_{i=1}^n
W_1(\mu_{x_{i-1}}^{\alpha,w},\mu_{x_i}^{\alpha,w})
+W_1(\mu_y^{\alpha,w},\delta_y).
\end{aligned}
\]
Since $d(x,y)=n$, the preceding estimates give
\[
n\le2(1-\alpha)
+n\bigl(1-(1-\alpha)(k-\varepsilon)\bigr).
\]
Thus $n(k-\varepsilon)\le2$. Letting $\varepsilon\downarrow0$ gives
$d(x,y)\le2/k$.
\end{proof}

\begin{corollary}\label{cor:no-infinite-positive-k}
If $G$ is connected, locally finite, and infinite, then
\[
\DN_{\LLY}^{k}(G)=\infty
\qquad(k>0).
\]
\end{corollary}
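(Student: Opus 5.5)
The plan is to apply \cref{prop:BM} and use that a connected, locally finite, infinite graph has infinite diameter. Let $k>0$, and suppose for contradiction that $\DN_{\LLY}^{k}(G)<\infty$. By the convention in \eqref{eq:DNk-intro}, there is then a positive weight $w$ with finite distortion and $\kappa_{\LLY}^{w}(e)\ge k$ for every edge $e$. Finite distortion plays no role in what follows: we only need some $k$-admissible weight to exist. Since $G$ is connected and locally finite, \cref{prop:BM} applies to $w$ and gives $\operatorname{diam}G\le 2/k<\infty$.

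The remaining step is to show that a connected, locally finite graph of finite diameter is finite. Fix a vertex $o$ and put $R:=\lfloor 2/k\rfloor$. Every vertex then lies in the ball $B_R(o)=\{v:d(o,v)\le R\}$. We show by induction on $r$ that each sphere $S_r(o)=\{v:d(o,v)=r\}$ is finite. The base case is $S_0(o)=\{o\}$. For the inductive step, every vertex of $S_{r+1}(o)$ is adjacent to some vertex of $S_r(o)$, so
\[
|S_{r+1}(o)|\le\sum_{u\in S_r(o)}d_u<\infty
\]
by local finiteness. Hence $V=B_R(o)=\bigcup_{r\le R}S_r(o)$ is finite, which contradicts the assumption that $G$ is infinite. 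Therefore no $k$-admissible weight exists, and in particular none of finite distortion, so $\DN_{\LLY}^{k}(G)=\infty$.

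There is no real obstacle here. The only point needing care is that local finiteness is used twice: once as a hypothesis of \cref{prop:BM}, and once in the ball-finiteness induction. Without it, an infinite star would have diameter $2$.
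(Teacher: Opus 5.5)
Your proof is correct and follows the paper's argument: \cref{prop:BM} gives $\operatorname{diam}G\le 2/k$, and a connected locally finite graph of finite diameter is finite. The paper states the second fact without proof, and your sphere-counting induction supplies that routine detail.
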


\begin{proof}
A finite-distortion $k$-admissible weight would give finite diameter by
\cref{prop:BM}; a locally finite connected graph of finite diameter is
finite.
\end{proof}

\subsection{Exact rooted-tree response}
\label{subsec:rooted-response}

\begin{theorem}[Rooted-tree response at level $k$]
\label{thm:rooted-response}
Let $k>0$, and let $G$ be a connected locally finite graph. Let $C$ be
a finite tree with $t:=|V(C)|\ge1$, rooted at $r$, and suppose that
$C$ is attached to $G-V(C)$ only through the boundary edge $e=xr$,
where $x\notin V(C)$. If a positive weight on $G$ makes $e$ and every
edge of $C$ have curvature at least $k$, then
\begin{equation}
\label{eq:rooted-share-lower}
p_x^w(e)=\frac{w_e}{d_x^w}\ge\frac{kt}{2}.
\end{equation}
If $A_x=d_x^w-w_e$ and $kt<2$, this is equivalent to
\begin{equation}
\label{eq:rooted-ratio-lower}
w_e\ge\frac{kt}{2-kt}A_x.
\end{equation}
If $kt=2$, it forces $A_x=0$.

Conversely, if the positive weights on all ambient edges incident with
$x$ are fixed and \eqref{eq:rooted-share-lower} holds, then the edges
of $C$ can be assigned positive weights so that every internal edge of
$C$ has curvature exactly $k$ and $e$ has curvature at least $k$.
Equality in \eqref{eq:rooted-share-lower} makes the boundary edge curvature
also equal to $k$.
\end{theorem}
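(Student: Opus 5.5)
Every edge of $C$ and the boundary edge $e$ are tree-like: any cycle through an edge of $C\cup\{e\}$ would have to leave $C$ through a second edge, but $e$ is the only connection. So \cref{lem:tree-like-formula} and its share form \eqref{eq:share-curvature} apply to all of them. The plan is to induct on $t$ with a stronger statement: for a rooted tree with $t$ vertices hanging from $x$, curvature at least $k$ on $C\cup\{e\}$ forces $p_x^w(e)\ge kt/2$.

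\emph{Base case $t=1$.} Here $r$ is a leaf, so $p_r^w(e)=1$, and \eqref{eq:share-curvature} gives $p_x^w(e)\ge k/2$.

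\emph{Inductive step.} Let $r$ have children $c_1,\dots,c_s$ with subtrees $C_1,\dots,C_s$ of sizes $t_1,\dots,t_s$, so $\sum_i t_i=t-1$. Each $C_i$ hangs from $r$ through the edge $rc_i$, and the hypotheses restrict to it. By induction, $p_r^w(rc_i)\ge kt_i/2$. Summing over the children,
\[
p_r^w(e)=1-\sum_i p_r^w(rc_i)\le 1-\frac{k(t-1)}{2}.
\]
The curvature condition on $e$ is $p_x^w(e)+p_r^w(e)\ge 1+k/2$. Combining the two gives
\[
p_x^w(e)\ge \frac{k}{2}+\frac{k(t-1)}{2}=\frac{kt}{2}.
\]
The reformulations are algebra. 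Since $p_x^w(e)=w_e/(w_e+A_x)$, the inequality $p\ge kt/2$ with $kt<2$ is equivalent to \eqref{eq:rooted-ratio-lower}. If $kt=2$, then $p=1$, which forces $A_x=0$.

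\emph{Converse.} The ambient weights at $x$ are fixed, so $A_x$ is fixed. Fix a target share $q:=p_x^w(e)\in[kt/2,1]$ that we want to achieve. We build the weights top-down along the tree, prescribing shares at every vertex, and keep the following invariant: each tree edge $uv$ ($v$ the child) with subtree size $t_v$ has child-side share $p_v^w(uv)=1-k(t_v-1)/2$.
\begin{itemize}
\item At each vertex $v$ with parent edge $uv$, split the remaining share $k(t_v-1)/2$ among the children $c$ of $v$ in proportion to their subtree sizes, giving each the share $kt_c/2$. These shares sum to $1$ together with the parent share, since $\sum_c t_c=t_v-1$.
\item For a child edge $vc$, the share at the parent end is then $kt_c/2$ and at the child end it is $1-k(t_c-1)/2$. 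Their sum is $1+k/2$, so by \eqref{eq:share-curvature} the curvature is exactly $k$.
\item At $r$ we use $p_r^w(e)=1-k(t-1)/2$, so $e$ gets $q+1-k(t-1)/2\ge 1+k/2$, with equality iff $q=kt/2$.
\end{itemize}

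It remains to realize these shares by positive weights, which is the main obstacle. Start with $w_e:=qA_x/(1-q)$ (any positive value if $A_x=0$). Proceed downward: once the parent weight $w_{uv}$ and its required share at $v$ are known, $d_v^w$ is determined and each child edge receives $w_{vc}=(\text{share})\cdot d_v^w$. This needs every prescribed share to be positive, and the parent share must be at most $1$, strictly less than $1$ when $v$ has children. The children's shares $kt_c/2>0$ because $k>0$. The parent share $1-k(t_v-1)/2$ lies in $(0,1)$ whenever $t_v\ge2$, because $k(t_v-1)/2<kt/2\le q\le 1$, i.e. $k(t-1)<2$. For leaves the parent share is exactly $1$, consistent with $d_v^w=w_{uv}$. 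If $q=1$ (so $A_x=0$), pick any $w_e>0$; the rest is unchanged.
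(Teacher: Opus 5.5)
Your proof is correct and follows the paper's argument. The paper obtains $p_r^w(e)\le 1-k(t-1)/2$ by summing \eqref{eq:share-curvature} over all $t-1$ internal edges of $C$ at once, using that the shares at each vertex of $C$ total $1$; your induction on subtree size is this same computation unrolled. Your converse is the same top-down construction, with each parent--child edge $uv$ getting share $kt_v/2$ at $u$ and $1-k(t_v-1)/2$ at $v$, and the same positivity check.
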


\begin{proof}
Put $c_k=1+k/2$. Since the boundary edge $e$ and every edge of $C$ are tree-like, \eqref{eq:share-curvature} applies. Summing it over the $t-1$ internal
edges of $C$ gives
\[
\sum_{uv\in E(C)}
\bigl(p_u^w(uv)+p_v^w(uv)\bigr)
=t-p_r^w(e)\ge(t-1)c_k,
\qquad
p_r^w(e)\le1-\frac{k(t-1)}2.
\]
Applying \eqref{eq:share-curvature} to $e=xr$ yields
\[
p_x^w(e)\ge c_k-p_r^w(e)\ge\frac{kt}{2},
\]
proving necessity; \eqref{eq:rooted-ratio-lower} is algebraically
equivalent.

For sufficiency, \eqref{eq:rooted-share-lower} implies $kt\le2$.
Root $C$ at $r$. For $v\in V(C)$ let $t_v$ be the number of vertices
in the descendant subtree rooted at $v$ and set
\[
\sigma_v:=1-\frac{k(t_v-1)}2.
\]
Since $t_v\le t$ and $kt\le2$, one has $\sigma_v>0$.
For each vertex $u\in V(C)$, let $e_u$ be its parent edge, with $e_r=e$. Set $[u]:=\{v\in V(C): v\ \text{is a child of}\ u\}$, and define recursively
\begin{equation}
\label{eq:direct-rooted-weight}
w_{uv}:=\frac{k t_v}{2\sigma_u}\,w_{e_u}.
\end{equation}
Since the child subtree sizes sum to $t_u-1$,
\[
d_u^w=w_{e_u}+\sum_{v\in [u]}w_{uv}=\frac{w_{e_u}}{\sigma_u},
\qquad
p_u^w(e_u)=\sigma_u,
\qquad
p_u^w(uv)=\frac{k t_v}{2}.
\]
Hence for every internal parent--child edge $uv$,
\[
p_u^w(uv)+p_v^w(uv)
=\frac{k t_v}{2}+1-\frac{k(t_v-1)}2
=1+\frac{k}{2},
\]
so its curvature is $k$ by \eqref{eq:share-curvature}. At the root,
$p_r^w(e)=1-k(t-1)/2$; together with
$p_x^w(e)\ge kt/2$, this gives curvature at least $k$ on $e$ by \eqref{eq:share-curvature}, with
equality when the inequality in \eqref{eq:share-curvature} is sharp.
\end{proof}

Thus the lower bound
\[
p_x^w(e)\ge \frac{k|V(C)|}{2}
\]
for the boundary edge of a hanging component $C$ depends only on
$|V(C)|$, not on the structure of $C$.

\subsection{The \texorpdfstring{$k$}{k}-loaded \texorpdfstring{$2$}{2}-core}
\label{subsec:loaded-core}

Let $G$ be finite and connected and let $K=\core(G)\ne\varnothing$.
For $x\in V(K)$ let $\mathcal C_x$ be the hanging components attached
at $x$ and put
\begin{equation}
\label{eq:hanging-size}
h_G(x):=\sum_{C\in\mathcal C_x}|V(C)|.
\end{equation}
Thus $h_G(x)>0$ exactly for $x\in A_G$.

\begin{definition}[$k$-loaded core weight]
\label{def:k-loaded}
A positive weight $\widehat w$ on $\widehat K_G$ is
\emph{$k$-loaded admissible} if
\begin{align}
\kappa_{\LLY}^{\widehat w,\widehat K_G}(e)&\ge k
&& (e\in E(K)),\label{eq:kloaded-core}\\
\kappa_{\LLY}^{\widehat w,\widehat K_G}(xx^\partial)&\ge k h_G(x)
&& (x\in A_G).\label{eq:kloaded-boundary}
\end{align}
\end{definition}

Since $xx^\partial$ is pendant, \eqref{eq:kloaded-boundary} is
equivalent to
\begin{equation}
\label{eq:kloaded-share}
\frac{\widehat w_{xx^\partial}}{d_x^{\widehat w}}
\ge\frac{k h_G(x)}2.
\end{equation}
If $kh_G(x)<2$, it is also equivalent to
\begin{equation}
\label{eq:kloaded-ratio}
\widehat w_{xx^\partial}
\ge
\frac{k h_G(x)}{2-kh_G(x)}
\sum_{\substack{e\in E(K)\\ e\ni x}}\widehat w_e.
\end{equation}

\begin{proof}[Proof of \cref{thm:loaded-core-intro}]
For $k=0$, the assertion is exactly the $2$-core reduction of
\cite{Xia2026InvariantI}, so assume $k>0$.

Let $w$ be $k$-admissible on $G$, and aggregate the first-layer weights
at each $x\in A_G$ by setting
\[
b_x:=\sum_{u\in N_G(x)\setminus V(K)}w_{xu}.
\]
By \cref{lem:boundary-aggregation}, core
curvatures are unchanged. If $e_C$ is the boundary edge of a component
$C\in\mathcal C_x$, \cref{thm:rooted-response} gives
\[
\frac{w_{e_C}}{d_x^w}\ge\frac{k|V(C)|}{2}.
\]
Summing over $C\in\mathcal C_x$ gives
$b_x/d_x^w\ge kh_G(x)/2$. Set 
\[
\widehat w_e:=w_e\quad(e\in E(K)),
\qquad
\widehat w_{xx^\partial}:=b_x.
\] 
Since aggregation preserves $d_x^w$, the
resulting proxy weight $\widehat w$ is $k$-loaded admissible.

Conversely, let $\widehat w$ be $k$-loaded admissible and fix
$x\in A_G$. Write $\mathcal C_x=\{C_1,\ldots,C_q\}$,
$t_i=|V(C_i)|$, and $b_x=\widehat w_{xx^\partial}$. By
\eqref{eq:kloaded-share}, choose positive $b_i$ with
\[
\sum_i b_i=b_x,
\qquad
\frac{b_i}{d_x^{\widehat w}}\ge\frac{k t_i}{2}.
\]
Assign $b_i$ to the boundary edge of $C_i$ and apply the converse part
of \cref{thm:rooted-response} independently to each component, then we can reconstruct a positive weight $w_G$ on G such that every edge of
$G\setminus K$ has curvature exactly $k$ and the boundary edge of $C_i$ has curvature at least $k$. The
total first-layer load at every core vertex remains $b_x$, so
\cref{lem:boundary-aggregation} preserves all core curvatures. The
reconstructed weight $w_G$ on $G$ is therefore $k$-admissible.
\end{proof}

\begin{corollary}[Hanging-size obstruction]
\label{cor:hanging-size}
Let $G$ be finite and connected with nonempty $2$-core
$K:=\core(G)$. If $k>0$ and $\DN_{\LLY}^{k}(G)<\infty$, then
\begin{equation}
\label{eq:hanging-size-obstruction}
h_G(x)<\frac{2}{k}
\qquad(x\in A_G).
\end{equation}
\end{corollary}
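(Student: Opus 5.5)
The plan is to combine the $k$-loaded $2$-core reduction (\cref{thm:loaded-core-intro}) with the local pendant-edge curvature formula. Suppose $k>0$ and $\DN_{\LLY}^{k}(G)<\infty$. The direction "$\Rightarrow$" of \cref{thm:loaded-core-intro} then gives a $k$-loaded admissible weight $\widehat w$ on $\widehat K_G$. Fix $x\in A_G$, so that $h_G(x)\ge1$.

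The proxy edge $xx^\partial$ is pendant, hence lies on no cycle and is tree-like in $\widehat K_G$. By \eqref{eq:kloaded-share}, the load condition \eqref{eq:kloaded-boundary} reads
\[
\frac{\widehat w_{xx^\partial}}{d_x^{\widehat w}}\ge\frac{k\,h_G(x)}{2}.
\]
Since $x\in V(K)$ and $K$ has minimum degree at least $2$, the vertex $x$ has at least two core edges. These carry positive weight, so
\[
d_x^{\widehat w}=\widehat w_{xx^\partial}+\sum_{e\in E(K),\,e\ni x}\widehat w_e>\widehat w_{xx^\partial}.
\]
The share on the left is therefore strictly less than $1$, which gives $k h_G(x)/2<1$, that is, $h_G(x)<2/k$.

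One could instead bypass \cref{thm:loaded-core-intro} and argue directly with \cref{thm:rooted-response}. Apply it to each hanging component $C\in\mathcal C_x$ to get $w_{e_C}/d_x^w\ge k|V(C)|/2$, sum over $C\in\mathcal C_x$, and use that the core edges at $x$ contribute positive weight to $d_x^w$. This again forces the total share to be strictly below $1$, giving the same strict inequality.

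The only point needing care is strictness. It comes from $x$ having at least one core edge, which holds because the $2$-core is nonempty and has minimum degree at least $2$. Finite distortion is not actually used beyond the existence of an admissible weight, and \cref{prop:attainment-monotonicity} plays no role here.
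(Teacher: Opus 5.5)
Your proposal is correct and matches the paper's proof: take the $k$-loaded admissible weight supplied by \cref{thm:loaded-core-intro}, read off the share inequality \eqref{eq:kloaded-share} at the proxy edge $xx^\partial$, and get strictness from the positive core-edge weight at $x$. Your alternative route via \cref{thm:rooted-response} simply unpacks the forward direction of that reduction's proof, so it is the same argument.
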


\begin{proof}
Let $x\in A_G$. By \eqref{eq:kloaded-share},
\[
\frac{\widehat w_{xx^\partial}}{d_x^{\widehat w}}
\ge\frac{k h_G(x)}2.
\]
It is strictly less than $1$ because $x$ also carries
positive core-edge weight.
\end{proof}

\section{Finite trees: fixed points and the curvature profile}
\label{sec:finite-trees}

Throughout this section, $T$ is a finite tree with $E(T)\ne\varnothing$.
The convention $\DN_{\LLY}^{k}(K_1)=1$ remains in force.

\subsection{The restoration map}

\begin{proposition}[Sharp feasible interval]
\label{prop:tree-feasible-range}
If $m:=|E(T)|$, then
\begin{equation}
\label{eq:tree-feasible-range}
\DN_{\LLY}^{k}(T)<\infty
\quad\Longleftrightarrow\quad
0\le k\le\frac{2}{m}.
\end{equation}
\end{proposition}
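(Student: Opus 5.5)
Both directions are short. For a finite tree $T$ every edge is tree-like, so \eqref{eq:share-curvature} applies on every edge. Since $T$ is finite, every positive weight has finite distortion. Hence $\DN_{\LLY}^{k}(T)<\infty$ is equivalent to the existence of some positive weight $w$ with $\kappa_{\LLY}^w(e)\ge k$ for all $e\in E(T)$.

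\emph{Necessity.} I will sum curvatures over all edges. Either use the Lin--Liu identity \eqref{eq:total-curvature} (a tree has infinite girth, hence girth at least $6$), or argue directly: summing $p_x^w(e)+p_y^w(e)$ over all edges counts, at each vertex $x$, the full share $\sum_{e\ni x}w_e/d_x^w=1$. The total is therefore $|V(T)|=m+1$. Combined with \eqref{eq:share-curvature} this gives
\[
m+1=\sum_{xy\in E(T)}\bigl(p_x^w(xy)+p_y^w(xy)\bigr)\ge m\Bigl(1+\frac k2\Bigr),
\]
that is, $k\le 2/m$. Equivalently, \eqref{eq:total-curvature} gives $\sum_e\kappa_{\LLY}^w(e)=2$, so $mk\le2$. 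The constraint $k\ge0$ holds by standing assumption.

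\emph{Sufficiency.} For $0\le k\le 2/m$, I will use the Lin--Liu constant-curvature weight. As recorded after \eqref{eq:average-curvature}, every finite tree satisfies the density condition \eqref{eq:LinLiu-density}, so it carries a positive weight of constant curvature $2/m\ge k$. This weight is $k$-admissible with finite distortion.

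A self-contained alternative is \cref{thm:rooted-response}. Root $T$ at a leaf $x$ with pendant edge $e=xr$, and let $C=T-x$, which has $t=m$ vertices. Since $d_x^w=w_e$, we get $p_x^w(e)=1\ge km/2$. The converse part of that theorem then fills $C$ with positive weights so that all edges have curvature at least $k$. The case $m=1$ is trivial in either approach.

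There is no real obstacle. The only step needing care is the double-counting identity for the shares, namely that the shares at each vertex sum to $1$, and it is immediate.
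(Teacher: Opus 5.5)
Your proposal is correct and follows the paper's proof. Necessity comes from the total-curvature identity $\sum_e\kappa_{\LLY}^w(e)=2$, and your share double-counting is exactly this identity in the tree case; sufficiency comes from the Lin--Liu constant-curvature weight of curvature $2/m$. Your self-contained alternative via \cref{thm:rooted-response} also works, and at $k=2/m$ it even produces the constant-curvature weight explicitly; however, that theorem assumes $k>0$ and its recursive weights vanish at $k=0$, so apply it at level $2/m$ and use that a $2/m$-admissible weight is $k$-admissible for every $0\le k\le 2/m$.
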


\begin{proof}
If every edge has curvature at least $k$, then
\eqref{eq:total-curvature} gives $mk\le2$. Conversely, by the constant-curvature existence result of Lin--Liu \cite[Theorem~3.1]{LinLiu2026},
there is a positive weight $w$ on $T$ such that
\[
\kappa_{\LLY}^w(e)=\frac{\sum_{e\in E}\kappa_{\LLY}^w(e)}{m}=\frac{2(|V(T)|-|E(T)|)}{m}=\frac{2}{m}
\qquad(e\in E(T)).
\]
Hence this weight is $k$-admissible for every
$0\le k\le2/m$.
\end{proof}

For $0\le k<2$ and $a,b\ge0$, recall that
\begin{equation}
\label{eq:Psi}
\Psi_k(a,b)
=
\frac{k(a+b)+\sqrt{k^2(a-b)^2+16ab}}{2(2-k)}.
\end{equation}
Note that $\Psi_k$ is positively homogeneous of degree one:
\[
\Psi_k(\lambda a,\lambda b)=\lambda\Psi_k(a,b)
\qquad(\lambda>0).
\]

\begin{lemma}[Restoration function]
\label{lem:Psi-properties}
For $0\le k<2$, $\Psi_k$ is continuous, positively homogeneous of
degree one, and nondecreasing in each variable; for $k>0$ it is
strictly increasing in each variable.
\end{lemma}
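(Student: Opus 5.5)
Continuity and homogeneity are immediate from the formula. The radicand $k^2(a-b)^2+16ab$ is a nonnegative quadratic form on $[0,\infty)^2$, being a sum of two nonnegative terms there. So $\Psi_k$ is a composition of continuous functions, and the denominator $2(2-k)$ is positive for $k<2$. Replacing $(a,b)$ by $(\lambda a,\lambda b)$ multiplies the linear part by $\lambda$ and the radicand by $\lambda^2$. Hence $\Psi_k(\lambda a,\lambda b)=\lambda\Psi_k(a,b)$ for $\lambda>0$.

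For monotonicity, by symmetry in $(a,b)$ it suffices to treat the first variable, with $b\ge0$ fixed. Put $R(a):=\sqrt{k^2(a-b)^2+16ab}$. On the open set where $R>0$ we have
\[
2(2-k)\,\partial_a\Psi_k=k+\frac{k^2(a-b)+8b}{R}.
\]
The planned inequality is $|k^2(a-b)+8b|\le$ something usable. More precisely, we want $kR\ge -(k^2(a-b)+8b)$, with strict inequality when $k>0$.

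If $k^2(a-b)+8b\ge0$, the derivative is at least $k\ge0$, and it is strictly positive when $k>0$. Otherwise $k^2(b-a)>8b\ge0$, and we compare squares:
\[
k^2R^2-\bigl(k^2(a-b)+8b\bigr)^2
=16k^2ab-16k^2b(a-b)-64b^2
=16b^2(k^2-4).
\]
This is strictly negative when $b>0$, since $k<2$. That is the wrong direction, so this step is the real obstacle and needs care.

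So I would recheck the sign. In this case $k^2(a-b)+8b<0$ forces $a<b$. It also forces $k^2(b-a)>8b$, that is, $k^2>8b/(b-a)\ge 8$, which contradicts $k<2$. Hence this case never occurs. Therefore $\partial_a\Psi_k\ge k/(2(2-k))\ge0$ wherever $R>0$, and the bound is strictly positive for $k>0$.

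The degenerate points where $R=0$ must be handled separately. There $16ab=0$ and $k(a-b)=0$, so for $k>0$ only $a=b=0$ remains. For $k=0$ these are the axis points, where $\Psi_0=\sqrt{ab}$ is clearly nondecreasing. These are isolated points on each horizontal line, so by continuity the monotonicity extends across them: a continuous function with nonnegative (resp.\ positive) derivative off finitely many points is nondecreasing (resp.\ strictly increasing). This gives nondecreasing monotonicity for $k\ge0$ and strict monotonicity for $k>0$.
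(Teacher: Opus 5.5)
Your proof is correct and follows essentially the paper's route: differentiate the explicit formula, show $\partial_a\Psi_k>0$ wherever the radicand is positive, and handle the degenerate points separately (the origin for $k>0$, and for $k=0$ simply use $\Psi_0=\sqrt{ab}$). Your sign step amounts to $k^2(a-b)+8b=k^2a+(8-k^2)b\ge0$, which is slightly cleaner than the paper's split into $a\ge b$ and $a<b$ using $\sqrt{k^2(a-b)^2+16ab}\ge k(b-a)$; the abandoned comparison of squares is unnecessary and can simply be deleted.
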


\begin{proof}
The first two assertions are immediate. For $k=0$,
\[
\Psi_0(a,b)=\sqrt{ab},
\]
which is nondecreasing in each variable.

Assume $k>0$. For $(a,b)\ne(0,0)$, direct differentiation gives
\[
\frac{\partial \Psi_k}{\partial a}
=
\frac{1}{2(2-k)}
\left(
k+
\frac{k^2(a-b)+8b}
{\sqrt{k^2(a-b)^2+16ab}}
\right).
\]
If $a\ge b$, this is clearly positive. If $a<b$, then
\[
\sqrt{k^2(a-b)^2+16ab}\ge k(b-a),
\]
and hence
\[
k\sqrt{k^2(a-b)^2+16ab}
+k^2(a-b)+8b
\ge 8b>0.
\]
Thus
\[
\frac{\partial \Psi_k}{\partial a}>0.
\]
By symmetry,
\[
\frac{\partial \Psi_k}{\partial b}>0.
\]
At the origin, strict increase along the coordinate axes follows from
\[
\Psi_k(a,0)=\frac{k}{2-k}a,
\qquad
\Psi_k(0,b)=\frac{k}{2-k}b.
\]
Therefore $\Psi_k$ is strictly increasing in each variable for $k>0$.
\end{proof}

\begin{lemma}[$k$-restoration formula]
\label{lem:k-restoration}
Let $G$ be connected and locally finite, let $w$ be a positive edge
weight, and let $xy\in\Etl(G)$. Put
\[
A_x=d_x^w-w_{xy},
\qquad
A_y=d_y^w-w_{xy}.
\]
For $0\le k<2$,
\begin{equation}
\label{eq:k-restoration-equivalence}
\kappa_{\LLY}^w(x,y)\ge k
\quad\Longleftrightarrow\quad
w_{xy}\ge\Psi_k(A_x,A_y),
\end{equation}
with equality on the right exactly when the curvature equals $k$.
\end{lemma}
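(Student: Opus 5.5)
The plan is to start from \cref{lem:tree-like-formula} and turn the curvature inequality into a quadratic inequality in the single unknown $w:=w_{xy}$, with $A_x,A_y\ge0$ held fixed. Since $d_x^w=w+A_x$ and $d_y^w=w+A_y$, formula \eqref{eq:tree-like-curvature} gives
\[
\kappa_{\LLY}^w(x,y)\ge k
\iff
2w\Bigl(\frac{1}{w+A_x}+\frac{1}{w+A_y}\Bigr)\ge 2+k .
\]
Both denominators are positive, so we may multiply through by $(w+A_x)(w+A_y)$ without changing the direction of the inequality. Writing $s=A_x+A_y$ and $p=A_xA_y$, the condition becomes
\[
2w(2w+s)\ge(2+k)(w^2+sw+p).
\]
Rearranged, this reads
\[
Q(w):=(2-k)w^2-ksw-(2+k)p\ge0 .
\]
The same manipulation shows that equality of curvature with $k$ is equivalent to $Q(w)=0$.

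Next, since $0\le k<2$, $Q$ is an upward-opening quadratic. Its roots have product $-(2+k)p/(2-k)\le0$, so there is exactly one nonnegative root $w_+$ and one nonpositive root. Hence, for $w>0$, the condition $Q(w)\ge0$ holds if and only if $w\ge w_+$, and $Q(w)=0$ holds if and only if $w=w_+$. It remains to identify $w_+$ with $\Psi_k(A_x,A_y)$. Compute the discriminant:
\[
k^2s^2+4(2-k)(2+k)p=k^2s^2+16p-4k^2p=k^2(A_x-A_y)^2+16A_xA_y .
\]
Therefore
\[
w_+=\frac{ks+\sqrt{k^2(A_x-A_y)^2+16A_xA_y}}{2(2-k)}=\Psi_k(A_x,A_y),
\]
which matches \eqref{eq:Psi}.

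One degenerate case needs attention: $p=0$. Here the roots are $0$ and $ks/(2-k)$, and the latter equals $\Psi_k$. If in addition $k=0$, the root is $w_+=0$. The condition $w\ge w_+=0$ then holds automatically, and this is consistent with the formula, since for $k=0$ and $A_x=0$ we have $Q(w)=2w^2>0$. However, the equality clause "curvature equals $k$ exactly when $w=\Psi_k$" would then need $w=0$, which never happens for a positive weight. In this sub-case the curvature is strictly positive and equality holds on neither side, so the equivalence of the equality statements still holds. I expect this bookkeeping to be the main obstacle, mild as it is: the sign of the smaller root, and the degenerate case $A_x=0$ or $A_y=0$.
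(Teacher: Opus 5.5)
Your proposal is correct and follows the paper's proof: substitute the tree-like formula, clear the positive denominators to get $(2-k)w_{xy}^2-k(A_x+A_y)w_{xy}-(2+k)A_xA_y\ge0$, and identify the larger root with $\Psi_k(A_x,A_y)$; your discriminant computation and degenerate-case check fill in details the paper leaves implicit. Your phrase ``exactly one nonnegative root'' is inaccurate when $A_xA_y=0$ and $k(A_x+A_y)>0$, since both roots are then nonnegative, but the argument only needs the smaller root to be $\le0<w_{xy}$. That follows in every case from $\sqrt{k^2(A_x-A_y)^2+16A_xA_y}\ge k(A_x+A_y)$, which handles all degenerate cases at once.
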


\begin{proof}
Substituting \eqref{eq:tree-like-curvature} and clearing denominators
gives
\[
(2-k)w_{xy}^2-k(A_x+A_y)w_{xy}-(2+k)A_xA_y\ge0.
\]
Its unique nonnegative root is \eqref{eq:Psi}.
\end{proof}

For $x=(x_e)_{e\in E(T)}\in[1,\infty)^{E(T)}$ and $e=u\sim v$, recall that
\begin{equation}
\label{eq:Auv-all-edges}
A_{u\mid v}(x)
=\sum_{\substack{f\in E(T)\\ f\ni u,\ f\ne e}}x_f.
\end{equation}

\begin{definition}[$k$-restoration map]
\label{def:k-restoration-map}
For $0\le k<2$, define
\[
F_{T,k}:[1,\infty)^{E(T)}\longrightarrow[1,\infty)^{E(T)}
\]
by
\begin{equation}
\label{eq:k-restoration-map}
(F_{T,k}(x))_{uv}
:=\max\left\{1,
\Psi_k(A_{u\mid v}(x),A_{v\mid u}(x))
\right\}.
\end{equation}
We call a vector $x\in [1,\infty)^{E(T)}$ a \emph{fixed point} of $F_{T,k}$ if 
\begin{equation}\label{eq:distortion-system}
    F_{T,k}(x)=x,
\end{equation}
and \emph{$k$-feasible} if $F_{T,k}(x)\le x$ coordinatewise. It is called \emph{normalized} if \[ \min_{e\in E(T)}x_e=1. \] 
\end{definition}

Viewing a normalized weight as a vector
$x\in [1,\infty)^{E(T)}$, \cref{lem:k-restoration} gives
\begin{equation}
\label{eq:k-feasible-super}
x\text{ is a $k$-admissible weight on $T$}
\quad\Longleftrightarrow\quad
x\text{ is a $k$-feasible vector in $[1,\infty)^{E(T)}$}.
\end{equation}
Moreover, since $\Psi_0(a,b)=\sqrt{ab}$,
\begin{equation}
\label{eq:F0-equals-F}
(F_{T,0}(x))_{uv}
=\max\left\{1,
\sqrt{A_{u\mid v}(x)A_{v\mid u}(x)}
\right\}.
\end{equation}
This is exactly the all-edge LLY-distortion map used in
\cite{Xia2026InvariantI}.

\begin{theorem}[Finite-tree fixed-point formula]
\label{thm:k-tree-fixed-point}
Let $m:=|E(T)|\ge2$ and $0\le k\le2/m$. Then $F_{T,k}$ has a unique
normalized fixed point $w_T^{\star,k}$, which is coordinatewise minimal
among normalized $k$-admissible weights, and
\begin{equation}
\label{eq:k-tree-DN-fixed}
\DN_{\LLY}^{k}(T)=\max_{e\in E(T)}w_{T,e}^{\star,k}.
\end{equation}
For $T=K_2$, the unit weight is the unique normalized weight and
$\DN_{\LLY}^{k}(K_2)=1$ for $0\le k\le2$.
\end{theorem}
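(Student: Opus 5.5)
The plan is a monotone (Kleene–Tarski) iteration for the order-preserving map $F_{T,k}$, followed by a separate uniqueness argument. The case $T=K_2$ is immediate: there is one edge, so the only normalized weight is $x=1$. Both endpoints have $A=0$, so the curvature is $2$ by \cref{lem:tree-like-formula}, and hence $\DN_{\LLY}^k(K_2)=1$ for $0\le k\le 2$.

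Assume now $m\ge2$ and $0\le k\le 2/m\le 1<2$.

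\textbf{Monotonicity and existence of a feasible vector.} By \cref{lem:Psi-properties}, $\Psi_k$ is nondecreasing in each variable. Each $A_{u\mid v}$ is a nonnegative linear form in $x$, so $F_{T,k}$ is order-preserving on $[1,\infty)^{E(T)}$.

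By \cref{prop:tree-feasible-range} there is a $k$-admissible weight $w$. After rescaling so that $\min_e w_e\ge1$ (allowed by \eqref{eq:scale}), \eqref{eq:k-feasible-super} shows that $w$ is $k$-feasible: $F_{T,k}(w)\le w$.

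\textbf{Construction of the minimal fixed point.} Start at $x^{(0)}=\mathbf 1$ and set $x^{(n+1)}=F_{T,k}(x^{(n)})$. Since $F_{T,k}$ takes values in $[1,\infty)^{E(T)}$, we have $x^{(1)}\ge x^{(0)}$. Monotonicity then makes the sequence nondecreasing, and it stays below every $k$-feasible $y$: inductively, $x^{(n)}\le y$ gives $x^{(n+1)}\le F_{T,k}(y)\le y$.

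So the iterates converge to a limit $x^\star$. By continuity of $\Psi_k$, $x^\star$ is a fixed point, and it lies coordinatewise below every $k$-feasible vector. Since fixed points are feasible, $x^\star$ is $k$-admissible.

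\textbf{Normalization of $x^\star$.} I will show $\min_e x^\star_e=1$. Suppose instead that every coordinate exceeds $1$. Then $x^\star_e=\Psi_k(A_{u\mid v},A_{v\mid u})$ for every edge, so every edge has curvature exactly $k$ by \cref{lem:k-restoration}. In that case \eqref{eq:total-curvature} forces $mk=2$.

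For $k<2/m$ this is already a contradiction. For $k=2/m$ it is not, and here I rescale: $x^\star/\min_e x^\star_e$ is $k$-feasible by scale invariance, and since $\min_e x^\star_e>1$ it is strictly smaller than $x^\star$ in some coordinate. This contradicts the minimality of $x^\star$. Hence $x^\star$ is normalized.

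\textbf{Identification of $\DN_{\LLY}^k(T)$.} Every normalized $k$-admissible $y$ satisfies $y\ge x^\star$, so $\max_e y_e\ge\max_e x^\star_e$. Since $x^\star$ is itself normalized and admissible, this gives \eqref{eq:k-tree-DN-fixed}.

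\textbf{Uniqueness (the main obstacle).} Let $z$ be another normalized fixed point. Then $z$ is feasible, so $z\ge x^\star$, and we must show $z=x^\star$.

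Take the smallest $\lambda\ge1$ with $z\le\lambda x^\star$, and suppose $\lambda>1$. Choose an edge $e$ with $z_e=\lambda x^\star_e$. I will compare $F(z)$ with $F(\lambda x^\star)=\max\{1,\lambda\Psi_k(\cdot)\}$ using homogeneity. At edges with $z_e>1$ we have $z_e=\Psi_k(A(z))\le\lambda\Psi_k(A(x^\star))$, so $x^\star_e\ge\Psi_k(A(x^\star))$ and both sides are tight. Strict monotonicity of $\Psi_k$ for $k>0$ (or the geometric-mean structure for $k=0$) then propagates equality $z=\lambda x^\star$ to all neighbouring edges that carry positive weight. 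Since $T$ is connected, this spreads to every edge, including an edge where $x^\star=1$. At that edge, normalization of $z$ gives $z\ge\lambda\cdot1>1$ while $z$ must also have its minimum equal to $1$, which is impossible.

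The delicate part is the bookkeeping in this propagation: it must handle edges where the $\max$ with $1$ is active, and the $k=0$ case, where $\Psi_0$ is only weakly monotone and pendant edges have $A=0$. For $k=0$ I would instead cite the uniqueness result of \cite{Xia2026InvariantI} via \eqref{eq:F0-equals-F}.
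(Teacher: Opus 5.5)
Your proof is correct, and its uniqueness step is the same ratio argument as the paper's: your $\lambda$ is the paper's $R$, and having $z\ge x^\star$ in hand only spares you the symmetrization. The genuine difference is the direction of the monotone iteration.

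\textbf{The paper's route.} The paper starts from a normalized $k$-admissible vector and iterates downward. Normalization of the limit is then free, because a coordinate equal to $1$ in $x^{(0)}$ is trapped between $1$ and a decreasing sequence. Coordinatewise minimality, however, comes only afterwards: the paper reruns the iteration from an arbitrary normalized admissible $z$ and invokes uniqueness to identify the limit. At $k=0$ this means relying on the cited result.

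\textbf{Your route.} Your iteration from $\mathbf 1$ produces the least element of the whole $k$-feasible set directly. This is independent of uniqueness and does not restrict to normalized competitors. The price is a separate normalization argument. That argument, rescaling by $c=\min_e x^\star_e$, works for every $k\in[0,2/m]$: if $c>1$, then $x^\star/c$ is feasible and strictly below $x^\star$. So the appeal to the total-curvature identity and the case split $k<2/m$ versus $k=2/m$ are unnecessary.

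\textbf{The bookkeeping you flag.} This step is not actually delicate.
\begin{itemize}
\item Any edge $f$ with $z_f=\lambda x^\star_f$ and $\lambda>1$ has $z_f\ge\lambda>1$, so the $\max$ with $1$ is inactive for $z$ there.
\item $x^\star_f\ge\Psi_k(A(x^\star))$ holds regardless.
\item For $k>0$, the equality propagates to every adjacent edge, not only those ``carrying positive weight'' (all edges do).
\item Once $z=\lambda x^\star$ on all edges, the contradiction is simply $\min_e z_e=\lambda>1$.
\end{itemize}
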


\begin{proof}
By \cref{prop:tree-feasible-range}, choose a normalized
$k$-admissible vector $x^{(0)}$. Iterating
$x^{(n+1)}=F_{T,k}(x^{(n)})$ gives a coordinatewise decreasing sequence
bounded below by $\mathbf1$, because $F_{T,k}$ is order preserving.
Its limit is a fixed point by continuity. Since $x^{(0)}$ is normalized, some coordinate initially equals $1$;
that coordinate remains $1$ throughout the iteration. Hence the limit
is normalized.

For uniqueness, the case $k=0$ is exactly the finite-tree fixed-point
result of \cite{Xia2026InvariantI}, by \eqref{eq:F0-equals-F}. Let $k>0$ and let $x,y$ be normalized fixed
points. Set
\[
R:=\max_{e\in E(T)}\frac{x_e}{y_e}.
\]
Without loss of generality, let $R\ge1$. If $R>1$, choose $e=u\sim v$ attaining it. Then $x_e=Ry_e>1$, and
homogeneity and monotonicity of $\Psi_k$ give
\[
x_e
=
\Psi_k(A_{u\mid v}(x),A_{v\mid u}(x))
\le
R\Psi_k(A_{u\mid v}(y),A_{v\mid u}(y))
\le
Ry_e
=
x_e.
\]
Equality holds throughout. Strict monotonicity therefore forces
\[
A_{u\mid v}(x)=R A_{u\mid v}(y),
\qquad
A_{v\mid u}(x)=R A_{v\mid u}(y).
\]
The equality of the corresponding sums,
together with $x_f\le R y_f$ termwise, forces $x_f=R y_f$ on every
adjacent edge $f$. Repeating this along the connected edge set of the tree
gives $x=Ry$, contradicting normalization. Exchanging $x$ and $y$ gives $y\le x$, and hence $x=y$.

Starting the iteration from any normalized $k$-admissible vector $z$
produces the unique fixed point below $z$, proving coordinatewise
minimality. Since the fixed point is itself $k$-admissible,
\eqref{eq:k-tree-DN-fixed} follows by \cref{prop:attainment-monotonicity}. For $T=K_2$, the assertion is immediate by \cref{prop:tree-feasible-range}.
\end{proof}

\begin{definition}[$k$-canonical weight]
\label{def:canonical-k-weight}
For $0\le k\le2/|E(T)|$, the normalized fixed point in
\cref{thm:k-tree-fixed-point} is the \emph{$k$-canonical weight}
$w_T^{\star,k}$; for $T=K_2$ it is the unit weight. By \eqref{eq:F0-equals-F}, $w_T^{\star,0}$ is exactly the normalized
canonical weight constructed in \cite{Xia2026InvariantI}.
\end{definition}

\begin{theorem}[Uniqueness of the normalized $k$-optimal weight]
\label{thm:k-optimal-weight-uniqueness}
For $0\le k\le2/|E(T)|$, $w_T^{\star,k}$ is the unique normalized
$k$-optimal weight. Equivalently, every $k$-optimal
weight is $c\,w_T^{\star,k}$ for some $c>0$.
\end{theorem}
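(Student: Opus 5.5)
The plan is to compare an arbitrary normalized $k$-optimal weight $w$ with $w_T^{\star,k}$ using \cref{thm:k-tree-fixed-point}, and then to force equality by propagating along the tree from an edge where the maximum is attained. The second sentence of the statement follows at once from the first by scale invariance \eqref{eq:scale}, so I only need to treat normalized weights. For $T=K_2$ there is nothing to prove, so assume $m\ge2$ and write $D:=\DN_{\LLY}^{k}(T)=\max_e w^{\star,k}_{T,e}$.

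\textbf{Step 1 (sandwich).} Let $w$ be normalized and $k$-optimal. Then $\max_e w_e=D$. By the coordinatewise minimality in \cref{thm:k-tree-fixed-point}, $w^{\star,k}_T\le w$. By \eqref{eq:k-feasible-super}, $w$ is $k$-feasible, i.e. $F_{T,k}(w)\le w$.

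\textbf{Step 2 (propagation from a maximal edge).} Pick an edge $e=u\sim v$ with $w^{\star,k}_{T,e}=D$. If $D=1$, then $w^{\star,k}_T=\mathbf 1$ and $\mathbf 1\le w\le D\mathbf 1$ already forces $w=\mathbf 1$. So assume $D>1$, and hence $k$-tightness at $e$ for the fixed point. The fixed-point relation, monotonicity of $\Psi_k$ (\cref{lem:Psi-properties}) and feasibility of $w$ give
\[
D=\Psi_k\bigl(A_{u\mid v}(w^{\star}),A_{v\mid u}(w^{\star})\bigr)\le\Psi_k\bigl(A_{u\mid v}(w),A_{v\mid u}(w)\bigr)\le w_e\le D .
\]
So equality holds throughout. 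For $k>0$, strict monotonicity then yields $A_{u\mid v}(w)=A_{u\mid v}(w^\star)$ and $A_{v\mid u}(w)=A_{v\mid u}(w^\star)$. Since $w\ge w^\star$ termwise, this gives $w_f=w^\star_f$ on every edge adjacent to $e$. The same chain applies verbatim at every edge $f$ with $w_f=w^\star_f>1$, because such an $f$ is tight for the fixed point. Hence $w=w^\star$ on the connected component, in the line graph, of the edge set $\{f: w^\star_f>1\}$ containing $e$, together with all edges adjacent to it. For $k=0$ the strictness is unavailable, and I would instead appeal to the uniqueness of the normalized optimal weight from \cite{Xia2026InvariantI}, via \eqref{eq:F0-equals-F}.

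\textbf{Step 3 (crossing edges of weight one); the main obstacle.} The propagation in Step 2 stalls at edges $f$ with $w^\star_f=1$, where possibly $\Psi_k(A(w^\star))<1$. Beyond such an edge, $w$ might a priori exceed $w^\star$ without raising the maximum. I would first try to show that the region reached in Step 2 is all of $E(T)$. The idea is a domination argument: assume $S:=\{f:w_f>w^\star_f\}\neq\varnothing$, take $R:=\max_f w_f/w^\star_f>1$, and look at an edge of $S$ attaining $R$. Using homogeneity of $\Psi_k$, $w\le Rw^\star$, and the constraint $1\ge\Psi_k(A(w))$ at the weight-one edges bounding $S$, I would aim to produce a contradiction with the fact that some coordinate of $w$ equals $D$ and is already matched. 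If this fails, the fallback is a strict improvement. I would show that when $S\ne\varnothing$, lowering $w$ on $S$ towards $w^\star$ keeps the weight $k$-feasible, since lowering a coordinate only lowers the neighbours' thresholds by monotonicity. I would then show that, thanks to strict monotonicity for $k>0$, this frees slack on the maximal edges, which can be exploited to lower $D$ slightly, contradicting optimality. Deciding which of these two routes actually closes the gap at the weight-one edges is the step I expect to require the most care.
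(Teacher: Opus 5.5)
\textbf{Gap: Step 3 is never resolved, and the missing idea is short.} Your Steps 1 and 2 are correct. Write $w^\star:=w_T^{\star,k}$. Step 2 is the contrapositive of the paper's propagation step: if $w_f>w^\star_f$ and $e$ is adjacent to $f$ with $w^\star_e>1$, then $w_e>w^\star_e$. Your handling of $k=0$ and of $D=1$ also matches the paper.

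Neither of your proposed routes for Step 3 works as described.
\begin{itemize}
\item In the ratio argument, $w$ is only $k$-feasible. At an edge $g$ attaining $R$ you get only $\Psi_k(A(w))\le R\,\Psi_k(A(w^\star))\le Rw^\star_g=w_g$, which is feasibility again, not a contradiction. The analogous argument in the fixed-point uniqueness proof works only because the larger vector there is itself a fixed point, hence tight.
\item The improvement route cannot work at all. The weight $w^\star$ is coordinatewise minimal among normalized $k$-feasible vectors and already attains $D$, so no admissible weight has maximum below $D$. Lowering $w$ on $S$ simply returns $w^\star$. The contradiction has to be $w_{e_0}>D$ at a maximal edge $e_0$.
\end{itemize}

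\textbf{The missing observation.} For $k>0$ the obstacle you worry about does not occur.
\begin{itemize}
\item If $e=uv$ is not pendant, then $A_{u\mid v}(w^\star)\ge1$ and $A_{v\mid u}(w^\star)\ge1$. By monotonicity, $w^\star_e\ge\Psi_k(1,1)=\frac{2+k}{2-k}>1$.
\item So every edge with $w^\star_f=1$ is pendant. It is a dead end and never has to be crossed.
\item The non-pendant edges are the edges of the subtree obtained by deleting all leaves, so they are connected in the line graph.
\item If $T$ is not a star, every pendant edge is adjacent to a non-pendant edge. If $T$ is a star, the unit weight is admissible and $D=1$.
\end{itemize}
Hence, when $D>1$, the set $\{f:w^\star_f>1\}$ is connected in the line graph and contains your starting edge. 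Together with its adjacent edges it exhausts $E(T)$, so your Step 2 already gives $w=w^\star$.

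\textbf{Comparison with the paper.} The paper runs the same argument in the forward direction. It takes $f$ with $w_f>w^\star_f$ and a shortest chain $f=f_0,\dots,f_r=e_0$ of adjacent edges ending at a maximal edge $e_0$. The intermediate $f_j$ are non-pendant, hence $w^\star_{f_j}>1$, and the strict inequality propagates along the chain to give $w_{e_0}>D$, contradicting $\max_e w_e=D$.
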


\begin{proof}
At $k=0$ this is the uniqueness assertion in the finite-tree
fixed-point theorem of \cite{Xia2026InvariantI}. The case $T=K_2$ is immediate,
so assume $k>0$ and $|E(T)|\ge2$. Let $w$ be normalized and $k$-optimal.
By coordinatewise minimality,
\[
w\ge w_T^{\star,k}.
\]
Put $D=\DN_{\LLY}^{k}(T)=\max_e w_{T,e}^{\star,k}$. If $D=1$, both
weights are the unit weight.

Assume $D>1$ and suppose that
\[
w_f>w_{T,f}^{\star,k}
\]
for some $f\in E(T)$. Let $e=uv$ be adjacent to $f$, with
$f\ni u$, and assume $w_{T,e}^{\star,k}>1$. Since
$w\ge w_T^{\star,k}$,
\[
A_{u\mid v}(w)>A_{u\mid v}(w_T^{\star,k}),
\qquad
A_{v\mid u}(w)\ge A_{v\mid u}(w_T^{\star,k}).
\]
By $k$-admissibility of $w$ and strict monotonicity of $\Psi_k$,
\[
w_e\ge
\Psi_k\bigl(A_{u\mid v}(w),A_{v\mid u}(w)\bigr)>
\Psi_k\bigl(
A_{u\mid v}(w_T^{\star,k}),
A_{v\mid u}(w_T^{\star,k})
\bigr)=
w_{T,e}^{\star,k}.
\]
Thus strict inequality propagates from $f$ to every adjacent edge whose
canonical weight is greater than $1$.

For every non-pendant edge $e=uv$,
\[
A_{u\mid v}(w_T^{\star,k})\ge1,
\qquad
A_{v\mid u}(w_T^{\star,k})\ge1,
\]
and hence
\[
w_{T,e}^{\star,k}
\ge\Psi_k(1,1)>1.
\]
Choose $e_0$ such that
\[
w_{T,e_0}^{\star,k}=D.
\]
Let
\[
f=f_0,f_1,\ldots,f_r=e_0
\]
be a shortest sequence of pairwise adjacent edges. Every
$f_j$, $1\le j<r$, is non-pendant, and $w_{T,e_0}^{\star,k}=D>1$.
Applying the preceding comparison successively gives
\[
w_{f_j}>w_{T,f_j}^{\star,k}
\qquad(j=1,\ldots,r),
\]
and therefore
\[
w_{e_0}>w_{T,e_0}^{\star,k}=D,
\]
contrary to $\max_{e\in E(T)}w_e=D$. Hence
\[
w=w_T^{\star,k}.
\]
The unnormalized statement follows from scale invariance.
\end{proof}

\begin{remark}
Lin and Liu \cite[Proposition~3.2]{LinLiu2026} show that, on a finite
high-girth graph, prescribing the curvature of every edge determines the
weight up to scaling. In \cref{thm:k-optimal-weight-uniqueness}, the
individual edge curvatures are not prescribed; only the lower bound
\[
\kappa_{\LLY}^w(e)\ge k
\qquad(e\in E(T))
\]
is imposed. Nevertheless, minimizing the distortion determines a unique
normalized weight. Moreover, this uniqueness phenomenon is special to
trees; see \cref{ex:nonuniqueness_phenomenon}.
\end{remark}

\subsection{Complementarity and profile regularity}
\label{subsec:profile-regularity}

\begin{corollary}[Complementarity of $w_T^{*,k}$]
\label{cor:complementarity}
For $0\le k\le2/|E(T)|$,
\begin{equation}
\label{eq:complementarity}
(w_{T,e}^{\star,k}-1)
\bigl(\kappa_{\LLY}^{w_T^{\star,k}}(e)-k\bigr)=0
\qquad(e\in E(T)).
\end{equation}
\end{corollary}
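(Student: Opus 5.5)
The complementarity relation \eqref{eq:complementarity} follows directly from the fixed-point equation \eqref{eq:distortion-system} together with \cref{lem:k-restoration}. First I handle the degenerate case $T=K_2$. There the canonical weight is the unit weight, so $w_{T,e}^{\star,k}-1=0$ on the single edge and the product vanishes trivially. Assume now $m:=|E(T)|\ge2$ and $0\le k\le 2/m$; note that $k\le 2/m\le 1<2$, so $\Psi_k$ and \cref{lem:k-restoration} are available. Since every edge of a tree is tree-like, \cref{lem:k-restoration} applies to each edge of $T$.

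Write $w:=w_T^{\star,k}$ and fix an edge $e=uv$. The normalized weight $w$ is viewed as a vector in $[1,\infty)^{E(T)}$. For the vertex $u$ of $e$, the quantity $A_u=d_u^w-w_e$ of \cref{lem:k-restoration} coincides with $A_{u\mid v}(w)$ from \eqref{eq:Auv-all-edges}, and likewise at $v$. By \cref{thm:k-tree-fixed-point}, $w$ is a fixed point, so
\[
w_e=\max\bigl\{1,\Psi_k(A_{u\mid v}(w),A_{v\mid u}(w))\bigr\}.
\]
In particular $w_e\ge\Psi_k(A_{u\mid v}(w),A_{v\mid u}(w))$, so by \cref{lem:k-restoration} we get $\kappa_{\LLY}^{w}(e)\ge k$. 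Hence both factors in \eqref{eq:complementarity} are nonnegative.

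It remains to show that the product vanishes. If $w_e=1$, the first factor is zero and we are done. If instead $w_e>1$, the maximum in the fixed-point identity is not attained by $1$. It is therefore attained by the second entry, which gives $w_e=\Psi_k(A_{u\mid v}(w),A_{v\mid u}(w))$. The equality clause of \cref{lem:k-restoration} then yields $\kappa_{\LLY}^{w}(e)=k$, so the second factor vanishes.

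No step here is a genuine obstacle. The only point needing care is the identification of $A_x,A_y$ in \cref{lem:k-restoration} with $A_{u\mid v}(w),A_{v\mid u}(w)$ for the tree weight, together with the equality case of that lemma. That equality case holds because $\Psi_k$ is the unique nonnegative root of the quadratic $(2-k)w_e^2-k(A_u+A_v)w_e-(2+k)A_uA_v$, and the quadratic's leading coefficient $2-k$ is positive. Consequently the quadratic vanishes at $w_e$ exactly when $w_e=\Psi_k$.
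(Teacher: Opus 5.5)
Your proposal is correct and matches the paper's proof: at the fixed point, $w_{T,e}^{\star,k}>1$ forces the $\Psi_k$ branch of the maximum, and the equality clause of \cref{lem:k-restoration} then gives curvature exactly $k$. Your separate treatment of $K_2$ (where $k$ may equal $2$ and $\Psi_k$ is undefined) and your check of the equality case are appropriate extra care.
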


\begin{proof}
If $w_{T,e}^{\star,k}>1$, the second term in
\eqref{eq:k-restoration-map} is active, so
\cref{lem:k-restoration} gives curvature exactly $k$. If the curvature
is strictly larger than $k$, the contrapositive gives $w_{T,e}^{\star,k}=1$.
\end{proof}

Define
\begin{equation}
\label{eq:tree-profile-def}
\mathcal D_T(k):=\DN_{\LLY}^{k}(T),
\end{equation}
as a map
$k\mapsto \DN_{\LLY}^{k}(T)$.

\begin{theorem}[Profile regularity]
\label{thm:profile-regularity}
Let $m=|E(T)|$. For every edge $e$, the map
$k\mapsto w_{T,e}^{\star,k}$ is nondecreasing and continuous on
$[0,2/m]$. Consequently, $\mathcal D_T$ is nondecreasing and continuous
there. At $k=2/m$,
\begin{equation}
\label{eq:profile-endpoint}
\kappa_{\LLY}^{w_T^{\star,2/m}}(e)=\frac{2}{m}
\qquad(e\in E(T)).
\end{equation}
If $k<2/m$, at least one edge has curvature strictly larger than $k$.
\end{theorem}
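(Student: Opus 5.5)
Monotonicity comes first and uses the order structure of $F_{T,k}$. For fixed $a,b\ge0$, the map $k\mapsto\Psi_k(a,b)$ is nondecreasing on $[0,2)$. The cleanest way to see this is through \cref{lem:k-restoration}: the quadratic $(2-k)w^2-k(a+b)w-(2+k)ab$ is nonincreasing in $k$ for each fixed $w\ge0$, since its $k$-derivative is $-w^2-(a+b)w-ab\le0$. Hence its nonnegative root moves right as $k$ increases. It follows that $F_{T,k_1}\le F_{T,k_2}$ coordinatewise for $k_1\le k_2$.

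Now fix $k_1\le k_2\le 2/m$. The vector $w^{\star,k_2}$ satisfies
\[
F_{T,k_1}(w^{\star,k_2})\le F_{T,k_2}(w^{\star,k_2})=w^{\star,k_2},
\]
so it is $k_1$-feasible and normalized. By the coordinatewise minimality in \cref{thm:k-tree-fixed-point}, we get $w^{\star,k_1}\le w^{\star,k_2}$. Taking maxima gives monotonicity of $\mathcal D_T$; this is also consistent with \cref{prop:attainment-monotonicity}.

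Continuity is the main step. I would show that $k\mapsto w^{\star,k}$ is continuous at every $k_0\in[0,2/m]$ using monotonicity and a limit argument. The family is bounded above by $w^{\star,2/m}$, so the one-sided limits $w^\pm:=\lim_{k\to k_0^\pm}w^{\star,k}$ exist coordinatewise. Moreover,
\[
w^{-}\le w^{\star,k_0}\le w^{+}.
\]
Since $(k,x)\mapsto F_{T,k}(x)$ is jointly continuous on $[0,2/m]\times[1,\infty)^{E}$ (note $2/m\le1<2$), passing to the limit in $F_{T,k}(w^{\star,k})=w^{\star,k}$ shows that $w^\pm$ are fixed points of $F_{T,k_0}$. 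The remaining issue is normalization, and this is the delicate point: a priori the edge where the minimum $1$ is attained could change with $k$. However, $\min_e w^{\star,k}_e=1$ for every $k$, and $E$ is finite. So along a sequence $k_n\to k_0^\pm$, some fixed edge has $w^{\star,k_n}_e=1$ infinitely often, and then $w^\pm_e=1$. Thus $w^\pm$ are normalized fixed points, and uniqueness in \cref{thm:k-tree-fixed-point} gives $w^\pm=w^{\star,k_0}$. Continuity of $\mathcal D_T=\max_e w^{\star,k}_e$ follows immediately.

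For the endpoint, I would sum curvatures using \eqref{eq:total-curvature}: $\sum_e\kappa(e)=2$ with every term at least $2/m$ forces every $\kappa(e)=2/m$ when $k=2/m$. The same identity handles $k<2/m$: if every edge had curvature exactly $k$, the sum would be $mk<2$, a contradiction. So some edge has curvature strictly larger than $k$. Combined with \cref{cor:complementarity}, that edge carries weight $1$.
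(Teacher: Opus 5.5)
Your proposal is correct and follows essentially the paper's route: monotonicity comes from coordinatewise minimality of the canonical weight, continuity from joint continuity of $(k,x)\mapsto F_{T,k}(x)$ together with uniqueness of the normalized fixed point, and both endpoint claims from the total-curvature identity \eqref{eq:total-curvature}. Your explicit check that the limit is normalized (some fixed edge has weight $1$ infinitely often) is a genuine improvement, because the paper invokes uniqueness without it, and at $k=2/m$ the map $F_{T,2/m}$ does have non-normalized fixed points $c\,w_T^{\star,2/m}$ with $c>1$; note only that your remark $2/m\le1<2$ needs $m\ge2$, so $K_2$ (where $w^{\star,k}$ is the unit weight for all $k$) should be set aside as the trivial case.
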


\begin{proof}
The case $K_2$ is immediate. Let $m\ge2$ and
$0\le k_1\le k_2\le2/m$. The $k_2$-canonical weight is
$k_1$-admissible, so minimality gives
\begin{equation}
\label{eq:canonical-monotonicity}
w_T^{\star,k_1}\le w_T^{\star,k_2}
\end{equation}
coordinatewise.

If $k_j\to k$, then by \eqref{eq:canonical-monotonicity},
\[
1\le w_{T,e}^{\star,k_j}\le w_{T,e}^{\star,2/m}
\qquad(e\in E(T)).
\]
Hence the sequence $\{w_T^{\star,k_j}\}$ lies in the compact set
\[
\prod_{e\in E(T)}
\left[1,w_{T,e}^{\star,2/m}\right].
\]
Let a convergent subsequence satisfy
\[
w_T^{\star,k_{j_\ell}}\longrightarrow w.
\]
Since
\[
F_{T,k_{j_\ell}}
\bigl(w_T^{\star,k_{j_\ell}}\bigr)
=
w_T^{\star,k_{j_\ell}}
\]
for every $\ell$, and since
$(k,x)\mapsto F_{T,k}(x)$ is continuous, passing to the limit gives
\[
F_{T,k}(w)=w.
\]
Thus $w$ is a fixed point of $F_{T,k}$. By uniqueness of the fixed point,
\[
w=w_T^{\star,k}.
\]
Thus every convergent subsequence of
$\{w_T^{\star,k_j}\}$ has the same limit $w_T^{\star,k}$.
Since the sequence lies in a compact set, it follows that
\[
w_T^{\star,k_j}\longrightarrow w_T^{\star,k}.
\]
Hence the map
\[
k\longmapsto w_{T}^{\star,k}
\]
is continuous on $[0,2/m]$.
In particular, the map
\[
k\longmapsto w_{T,e}^{\star,k}
\]
is continuous on $[0,2/m]$ for every $e\in E(T)$. Since $E(T)$ is finite and
\begin{equation}\label{eq:D_T_k}
    \mathcal D_T(k)=\max_{e\in E(T)}w_{T,e}^{\star,k},
\end{equation}
continuity of each $k\mapsto w_{T,e}^{\star,k}$ implies continuity of
$\mathcal D_T$. Since each map $k\longmapsto w_{T,e}^{\star,k}$ is nondecreasing, the function $\mathcal D_T$ is nondecreasing as well by \eqref{eq:D_T_k}.

At $k=2/m$, all curvatures are at least $2/m$ and their sum is $2$ by
\eqref{eq:total-curvature}, so all are equal. If $k<2/m$ and all
curvatures equaled $k$, the same identity would give $2=mk<2$, a
contradiction.
\end{proof}

\begin{proof}[Proof of \cref{thm:tree-profile-intro}]
The sharp feasible interval is \cref{prop:tree-feasible-range}; the case
$K_2$ is included in \cref{thm:k-tree-fixed-point}. For $|E(T)|\ge2$,
the fixed-point formula and coordinatewise minimality are
\cref{thm:k-tree-fixed-point}, uniqueness of the normalized $k$-optimum is
\cref{thm:k-optimal-weight-uniqueness}, complementarity is
\cref{cor:complementarity}, and monotonicity, continuity, and the endpoint
constant-curvature statement are \cref{thm:profile-regularity}.
\end{proof}

\subsection{Examples}

\begin{example}[Stars]
\label{ex:star-profile}
For $T=K_{1,m}$, the unit weight has curvature $2/m$ on every edge.
Thus
\[
\DN_{\LLY}^{k}(K_{1,m})=1
\qquad(0\le k\le2/m).
\]
\end{example}

\begin{example}[The path $P_4$]
\label{ex:P4-profile}
Let
\[
P_4:=v_1-v_2-v_3-v_4.
\]
By symmetry, the $k$-canonical weight has common pendant weight $a$ and
middle-edge weight $t$. The fixed-point equation \eqref{eq:distortion-system} for the middle edge gives
\[
t=\Psi_k(a,a)=a\frac{2+k}{2-k}\ge a.
\]
Normalization forces $a=1$, hence
\begin{equation}
\label{eq:P4-profile}
\DN_{\LLY}^{k}(P_4)=\frac{2+k}{2-k}
\qquad(0\le k\le2/3).
\end{equation}
By \cref{eq:tree-like-curvature}, the curvatures for pendants are
\[
\kappa_{\LLY}^{w_{P_4}^{\star,k}}(v_1v_2)
=
\kappa_{\LLY}^{w_{P_4}^{\star,k}}(v_3v_4)
=
1-\frac{k}{2},
\] strictly larger than $k$ for
$k<2/3$, while all three curvatures equal $2/3$ at the endpoint.
\end{example}

\begin{example}[A pendant $k$-canonical weight above one]
\label{ex:pendant-weight-greater-one}
Let $T$ be the double star $ST_{1,2}$, obtained by attaching one leaf
to one endpoint of the center edge and two leaves to the other, and take
the endpoint $k=1/2$. By symmetry, let $a$
be the single pendant weight at one center, $b$ the common weight of
the two pendant edges at the other center, and $t$ the center-edge
weight. By \cref{thm:tree-profile-intro}, 
\[
\kappa_{\LLY}^{w_T^{\star,1/2}}(e)=\frac{1}{2}
\qquad(e\in E(T)).
\]
Then \eqref{eq:tree-like-curvature} gives
\[
\frac{2a}{a+t}=\frac12,
\qquad
\frac{2b}{2b+t}=\frac12,
\]
so $t=3a=2b$. After normalization,
\[
a=1,\qquad t=3,\qquad b=\frac32.
\]
Thus positive $k$ can force pendant $k$-canonical weights strictly above
$1$, a phenomenon absent at $k=0$; see \cite[Theorem~1.3]{Xia2026InvariantI}.
\end{example}

\begin{example}[Nonuniqueness on a non-tree graph]
\label{ex:nonuniqueness_phenomenon}
Let $G$ be the graph with
\[
V(G)=\{x,y,a,b,c,d\},
\qquad
E(G)=\{xy,xa,xb,ab,yc,yd\}.
\]
The same graph was used in \cite{Xia2026InvariantI} to show
nonuniqueness of normalized optimal weights at $k=0$.

Fix $0<k<1/3$ and put
\[
t_k:=\frac{2(2+k)}{2-k}.
\]
For
\[
1\le s\le
\min\left\{
t_k,\frac{2}{3k}-1
\right\},
\]
define a weight $w^{(s)}$ by
\[
w^{(s)}_{xy}=t_k,
\qquad
w^{(s)}_{xa}=w^{(s)}_{xb}
=w^{(s)}_{yc}=w^{(s)}_{yd}=1,
\qquad
w^{(s)}_{ab}=s.
\]
We use the same local curvature computation as in the nonuniqueness example of \cite[Remark~5.6]{Xia2026InvariantI} for $\kappa_{\LLY}^{w^{(s)}}(a,b)$ and $\kappa_{\LLY}^{w^{(s)}}(x,a)$:
\[
\kappa_{\LLY}^{w^{(s)}}(a,b)
=
2-\frac{1}{s+1}>k.
\]
\[
\kappa_{\LLY}^{w^{(s)}}(x,a)
=
\kappa_{\LLY}^{w^{(s)}}(x,b)
=
\frac{1}{s+1}-\frac{k}{2}\ge k.
\]
A direct computation by \cref{eq:tree-like-curvature} gives
\[
\kappa_{\LLY}^{w^{(s)}}(x,y)=k,
\]
\[
\kappa_{\LLY}^{w^{(s)}}(y,c)
=
\kappa_{\LLY}^{w^{(s)}}(y,d)
=
\frac{2-k}{4}>k.
\]
Hence $w^{(s)}$ is $k$-admissible and
\[
\Dist(w^{(s)})=t_k.
\]
Conversely, if $w$ is any normalized $k$-admissible weight on $G$, then
the tree-like edge $xy$ satisfies
\[
A_{x\mid y}(w)\ge2,
\qquad
A_{y\mid x}(w)\ge2.
\]
By \cref{lem:k-restoration},
\[
w_{xy}
\ge
\Psi_k(2,2)
=
\frac{2(2+k)}{2-k}
=t_k.
\]
Therefore
\[
\DN_{\LLY}^{k}(G)=t_k.
\]
Since
\[
\min\left\{
t_k,\frac{2}{3k}-1
\right\}>1
\qquad(0<k<1/3),
\]
the family $w^{(s)}$ contains a nontrivial continuum of distinct
normalized $k$-optimal weights.
\end{example}

\section{Tree-like skeletons and high-girth graphs}
\label{sec:skeleton}

\subsection{Comparison at level \texorpdfstring{$k$}{k}}

The tree-like subgraph comparison from \cite{Xia2026InvariantI}
extends to every prescribed lower bound.

\begin{theorem}[Tree-like subgraph comparison]
\label{thm:k-tree-like-comparison}
Let $G$ be connected and locally finite, let $k\ge0$, and let
$H\subseteq G$ be connected with $E(H)\subseteq\Etl(G)$. Then
\begin{equation}
\label{eq:k-tree-like-comparison}
\DN_{\LLY}^{k}(H)\le\DN_{\LLY}^{k}(G).
\end{equation}
\end{theorem}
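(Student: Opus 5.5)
The plan is to restrict a $k$-admissible weight of $G$ to $H$ and then repair it on $H$ without increasing the distortion. I first dispose of trivial cases. If $\DN_{\LLY}^{k}(G)=\infty$ there is nothing to prove. If $E(H)=\varnothing$, then $H=K_1$ and $\DN_{\LLY}^{k}(H)=1$, which is at most $\DN_{\LLY}^{k}(G)$. The case $k=0$ is \eqref{eq:partI-tree-like-comparison}, so assume $k>0$. Then \cref{cor:no-infinite-positive-k} forces $G$ to be finite, and hence $H$ is finite.

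Next I argue that $H$ is a tree. A cycle in $H$ would be a cycle of $G$ through tree-like edges, so it has length at least $6$. I do not see how to exclude this for a general $H$. The approach I would use instead relies on the structure of $G$: restrict $w$ to $E(H)$ and work edgewise with the restoration inequality, without assuming that $H$ is a tree.

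Let $w$ be a normalized $k$-admissible weight on $G$ with $\Dist(w)=D$. Set $v:=w|_{E(H)}$, so that $\Dist(v)\le D$. Fix an edge $xy\in E(H)$. This edge is tree-like in $G$ and also in $H$, since cycles of $H$ are cycles of $G$. By \cref{lem:k-restoration} in $G$,
\[
w_{xy}\ge\Psi_k\bigl(A_x^G,A_y^G\bigr).
\]
Since $H\subseteq G$, we have $A_x^H(v)\le A_x^G(w)$ and $A_y^H(v)\le A_y^G(w)$, where these are the sums over the other edges of $H$, respectively $G$, at $x$ and $y$. By the monotonicity in \cref{lem:Psi-properties},
\[
v_{xy}=w_{xy}\ge\Psi_k\bigl(A_x^H(v),A_y^H(v)\bigr),
\]
and \cref{lem:k-restoration} applied in $H$ gives $\kappa_{\LLY}^{v,H}(xy)\ge k$. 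Thus $v$ is $k$-admissible on $H$ with $\Dist(v)\le\Dist(w)$. Taking the infimum over $w$ yields \eqref{eq:k-tree-like-comparison}. If $G$ is infinite, the same argument works verbatim for $k=0$ and is needed only there.

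The main obstacle is checking that \cref{lem:k-restoration} applies in $H$. That lemma needs $xy\in\Etl(H)$ and $H$ connected and locally finite. Both hold: $H$ is connected by hypothesis, it is locally finite as a subgraph of $G$, and any cycle of $H$ of length $3$, $4$ or $5$ through $xy$ would be such a cycle in $G$. One small point remains. The normalization of $v$ may fail, because $\min_{E(H)}v$ can exceed $1$. This is harmless: distortion is a ratio, and $\sup_{E(H)}v\le\sup_{E(G)}w$ and $\inf_{E(H)}v\ge\inf_{E(G)}w$ give $\Dist(v)\le\Dist(w)$ directly.
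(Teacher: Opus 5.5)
Your argument is the paper's: restrict a finite-distortion $k$-admissible weight to $H$, observe that each $xy\in E(H)$ stays tree-like in $H$, and use that the weighted degrees at $x$ and $y$ can only drop. The curvature therefore cannot decrease, while $\Dist$ cannot increase.

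There is one uncovered case. You route the comparison through \cref{lem:k-restoration} and $\Psi_k$, which are only defined for $0\le k<2$, but the theorem is stated for all $k\ge0$. The paper avoids this by comparing the two instances of \eqref{eq:tree-like-curvature} directly:
\[
\kappa_{\LLY}^{w|_H,H}(x,y)=2w_{xy}\Bigl(\frac1{d_{x,H}^{w|_H}}+\frac1{d_{y,H}^{w|_H}}\Bigr)-2\ \ge\ 2w_{xy}\Bigl(\frac1{d_{x,G}^{w}}+\frac1{d_{y,G}^{w}}\Bigr)-2=\kappa_{\LLY}^{w,G}(x,y).
\]
This inequality needs no restriction on $k$. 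Alternatively, for $k\ge2$ note that a tree-like edge has curvature at most $2$, with equality only when $G=K_2$, so that case is trivial.

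Your detour on whether $H$ is a tree, and the reduction to finite $G$, are not needed and can be dropped.
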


\begin{proof}
If $E(H)=\varnothing$, then $H=K_1$ and the claim follows from
$\DN_{\LLY}^k(K_1)=1\le\DN_{\LLY}^k(G)$. If
$\DN_{\LLY}^k(G)=\infty$, there is nothing to prove. Otherwise, restrict any finite-distortion $k$-admissible weight $w$ on
$G$ to $H$. For any edge $xy\in E(H)$,
\[
d_{x,H}^{w|_H}\le d_{x,G}^w,
\qquad
d_{y,H}^{w|_H}\le d_{y,G}^w,
\]
so \eqref{eq:tree-like-curvature} shows that its curvature cannot
decrease. Restriction also cannot increase distortion. Indeed, $\Dist(w|_H)\le\Dist(w)$. Taking infima
gives the result.
\end{proof}

\begin{corollary}[Degree bound along a tree-like edge]
\label{cor:degree-response}
Let $G$ be finite and connected, $0\le k<2$, and
$D:=\DN_{\LLY}^{k}(G)<\infty$. Then every edge $xy\in\Etl(G)$ satisfies
\begin{equation}
\label{eq:degree-response}
D\ge\Psi_k(d_x-1,d_y-1).
\end{equation}
\end{corollary}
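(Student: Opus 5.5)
The plan is to prove \cref{cor:degree-response} directly from \cref{lem:k-restoration} and \cref{prop:attainment-monotonicity}. Since $G$ is finite and $D<\infty$, \cref{prop:attainment-monotonicity} gives a $k$-optimal weight $w$. We normalize it so that $\min_{e}w_e=1$; then $\max_e w_e=D$ and every edge weight lies in $[1,D]$.

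Fix a tree-like edge $xy\in\Etl(G)$. Put $A_x=d_x^w-w_{xy}$ and $A_y=d_y^w-w_{xy}$. Then $A_x$ is a sum of $d_x-1$ weights, each at least $1$, so $A_x\ge d_x-1$; likewise $A_y\ge d_y-1$. The hypothesis $0\le k<2$ allows us to apply \cref{lem:k-restoration}. Since $w$ is $k$-admissible, it gives
\[
D\ge w_{xy}\ge\Psi_k(A_x,A_y).
\]

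It remains to compare $\Psi_k(A_x,A_y)$ with $\Psi_k(d_x-1,d_y-1)$. By \cref{lem:Psi-properties}, $\Psi_k$ is nondecreasing in each variable on $[0,\infty)^2$, so
\[
\Psi_k(A_x,A_y)\ge\Psi_k(d_x-1,d_y-1).
\]
The degenerate cases, where $d_x=1$ or $d_y=1$ and an argument is $0$, are covered because the lemma's monotonicity holds on all of $[0,\infty)^2$. Combining the two displays yields \eqref{eq:degree-response}.

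I expect no real obstacle. The only care needed is to use the normalization $\min_e w_e=1$, so that each neighboring weight is at least $1$, together with $w_{xy}\le\max_e w_e=D$, and to note that attainment is not even essential. Alternatively, one can take any normalized $k$-admissible $w$ with $\Dist(w)\le D+\varepsilon$ and let $\varepsilon\to0$.
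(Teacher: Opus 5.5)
Your proposal is correct and is essentially the paper's proof. You take a normalized $k$-optimal weight from \cref{prop:attainment-monotonicity}, bound $A_x\ge d_x-1$ and $A_y\ge d_y-1$, and chain $D\ge w_{xy}\ge\Psi_k(A_x,A_y)\ge\Psi_k(d_x-1,d_y-1)$ via \cref{lem:k-restoration} and the monotonicity in \cref{lem:Psi-properties}; your $\varepsilon$-approximation variant also works and shows that attainment is not needed.
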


\begin{proof}
For a normalized $k$-optimal weight $w$, $1\le w_e\le D$. Hence on a tree-like
edge $xy$, we have
\[
A_x\ge d_x-1\ \text{and}\ A_y\ge d_y-1.
\] 
By \cref{lem:k-restoration} and the monotonicity of $\Psi_k$,
\[
D\ge w_{xy}
\ge \Psi_k(A_x,A_y)
\ge \Psi_k(d_x-1,d_y-1).
\]
This proves \eqref{eq:degree-response}.
\end{proof}

\subsection{Quantitative skeleton structure}

\begin{proof}[Proof of \cref{thm:quantitative-skeleton-intro}]
By \cref{cor:no-infinite-positive-k}, $G$ is finite. Note that $D=\DN_{\LLY}^k(G)$. \cref{prop:attainment-monotonicity} gives,
\[
\DN_{\LLY}(G)
=
\DN_{\LLY}^{0}(G)
\le
\DN_{\LLY}^{k}(G)
=
D
<\infty.
\]
The forest obstruction from \cite[Theorem~1.1]{Xia2026InvariantI} says that either
$\Skel(G)$ is a forest or
$G=C_n$ for some $n\ge6$. The cycle case is impossible for $k>0$ by
\eqref{eq:total-curvature}, whose left-hand side would be at least
$nk>0$ but equals $0$. Thus $\Skel(G)$ is a forest.

Let $T$ be a nontrivial component. By
\cref{thm:k-tree-like-comparison}, $\DN_{\LLY}^{k}(T)\le D$. \cref{thm:tree-profile-intro} gives
\[
|E(T)|\le\left\lfloor\frac2k\right\rfloor.
\]
Finally, by \cref{prop:attainment-monotonicity},
\[
\DN_{\LLY}(T)\le\DN_{\LLY}^{k}(T)\le D,
\]
so the branch-vertex bound from \cite[Theorem~1.5]{Xia2026InvariantI} yields
$|B(T)|\le\lceil D\rceil$.
\end{proof}

\subsection{Complete high-girth classification}

\begin{proof}[Proof of \cref{cor:girth6-intro}]
If $w$ is $k$-admissible, \eqref{eq:total-curvature} gives
\[
k|E(G)|\le2(|V(G)|-|E(G)|).
\]
Since $k>0$, the right-hand side is positive, so $|E(G)|<|V(G)|$.
Connectedness gives $|E(G)|\ge|V(G)|-1$, hence
$|E(G)|=|V(G)|-1$ and $G$ is a tree. Then \cref{thm:tree-profile-intro} gives
$k\le2/|E(G)|$. Conversely, the assertion follows directly from
\cref{thm:tree-profile-intro}.
\end{proof}

\begin{remark}[Complete locally finite high-girth picture]
Let $G$ be connected and locally finite, with $E(G)\ne\varnothing$ and
girth at least $6$. At $k=0$, \cite[Corollary~6.6, 6.7]{Xia2026InvariantI} gives finite
distortion exactly for finite trees, cycles $C_n$ with $n\ge6$, the
double ray, and one-ended trees obtained from a finite tree by attaching
a ray whose vertices away from the attachment point all have degree
$2$. For every $k>0$, finite $k$-distortion occurs exactly
for finite trees satisfying $k\le2/|E(G)|$.
\end{remark}

\section*{Acknowledgements}

The author used ChatGPT (OpenAI) as a research-assistance tool for
brainstorming, preliminary searches, and suggestions concerning exposition
and LaTeX organization. The author independently checked the mathematical
statements and references.

\end{document}